\documentclass[a4paper,10pt]{amsart}

\usepackage{amsthm}
\usepackage{amssymb}
\usepackage{amsmath}
\usepackage{mathtools}
\usepackage{pdfpages}
\usepackage{changes}
\usepackage{exscale,cite,color,amsopn}
\usepackage[colorlinks=true,pdftex,unicode=true,linktocpage,bookmarksopen,hypertexnames=false]{hyperref}
\usepackage{colortbl}
\usepackage{enumitem}

\usetikzlibrary{trees}

\renewcommand{\leq}{\leqslant}
\renewcommand{\geq}{\geqslant}

\overfullrule=1mm

\DeclareMathOperator{\Sym}{Sym}

\newcommand{\Z}{\mathbb{Z}}

\makeatletter
\numberwithin{equation}{section}
\numberwithin{figure}{section}
\numberwithin{table}{section}
\newtheorem{thm}{Theorem}[section]
\newtheorem*{thm*}{Theorem}
\newtheorem{lem}[thm]{Lemma}

\newtheorem{pro}[thm]{Proposition}

\theoremstyle{definition} % change of style
\newtheorem{defn}[thm]{Definition}
\newtheorem{problem}[thm]{Problem}

\newtheorem{exa}[thm]{Example}

\makeatother

\title{On the enumeration of finite $L$-algebras}
\author{C. Dietzel, P. Mench\'on, L. Vendramin}

\address{Department of Logic, Nicolaus Copernicus University in Toru\'n,\linebreak 
Fosa Staromiejska 1a, 87-100 Toru\'n, Poland}
\email{paula.menchon@v.umk.pl}

\address{IMAS--CONICET and Depto. de Matem\'atica, FCEN, Universidad de Buenos Aires, Pab.~1, Ciudad Universitaria, C1428EGA, Buenos Aires, Argentina}
\email{lvendramin@dm.uba.ar}

\address{Department of Mathematics and Data
Science, Vrije Universiteit Brussel, Pleinlaan 2, 1050 Brussel}
\email{carstendietzel@gmx.de}
\email{Leandro.Vendramin@vub.be}

\subjclass[2010]{Primary:03G25; Secondary: 06D20}

\keywords{}

\begin{document}

\begin{abstract}
    We use Constraint Satisfaction Methods to construct and enumerate 
    finite $L$-algebras up to isomorphism. These objects were recently
    introduced by Rump and appear
    in Garside theory, algebraic logic, and the study of the combinatorial
    Yang--Baxter equation. There are  
    377,322,225 isomorphism classes of $L$-algebras of size eight. 
    The database constructed suggest
    the existence of bijections between certain classes of $L$-algebras
    and well-known combinatorial objects. We prove that Bell numbers enumerate isomorphism classes of finite linear $L$-algebras. We also prove that finite regular $L$-algebras are
    in bijective correspondence with infinite-dimensional Young diagrams. 
\end{abstract}

\maketitle

\section{Introduction}
 
In the study of set-theoretical solutions of the Yang--Baxter equation, 
the \emph{cycloid equation},  
\[
(x\cdot y)\cdot (x\cdot z)=
(y\cdot x)\cdot (y\cdot z), 
\]
plays a fundamental role, see for example \cite{MR3374524,MR2442072,MR2132760}. 
The formula first occurred in algebraic logic \cite{MR292965,MR403432,MR944033} 
and also appears 
in the theory of Garside groups \cite{MR3362691} and in connection with
von Neumann algebras and orthomodular lattices \cite{MR4373225,MR3824801}. To combine 
similarities between these different objects, Rump 
introduced $L$-algebras in \cite{MR2437503}. 
It turns out that
$L$-algebras appear in connection with other topics in 
algebraic logic \cite{MR3012376}, 
in the combinatorial study of the Yang--Baxter equation \cite{MR2437503} 
and in number theory \cite{MR3805465}. 

An $L$-algebra is a set $X$ together with a binary operation 
$(x,y)\mapsto x\cdot y$ 
on $X$ 
that satisfies 
the cycloid equation and such that 
there exists a distinguished element $e\in X$ with some particular 
properties (see Definition \ref{defn:L} for details). The distinguished
element $e$ is crucial
to define a partial order on $X$:
\begin{equation}
    \label{eq:poset_intro}
    x \leq y \Longleftrightarrow x \cdot y = e.
\end{equation}

If $P$ is a partially ordered set (\emph{poset}, for short), we call an $L$-algebra $X$ 
with the same underlying set as $P$ an \emph{$L$-algebra on $P$}, when
\eqref{eq:poset_intro} 
holds for all $x,y \in P$. 
In this case, 
$P$ has a greatest element that coincides with the logical unit of $X$.

$L$-algebras have everything
to become fundamental objects in algebraic logic. 
On the one hand, when the binary operation of $X$ is interpreted as 
implication and
equality as logical equivalence between propositions, $L$-algebras
provide a significant extension of classical logic. For instance, the cycloid equation holds in classical and intuitionistic logic, and also in logics based on a continuous t-norm. On the other hand, 
$L$-algebras have a rich algebraic structure that provides
new tools to study logical structures. For example, every 
$L$-algebra 
$X$ admits a structure group $G(X)$, and this group can be used to 
give a conceptual 
and short proof of Mundici's embedding theorem \cite{MR819173}.

One of our main results here is an explicit construction of 
isomorphism classes of $L$-algebras 
of small size.
For small size, the approach is somewhat 
inspired by \cite{MR4405502} 
and the cycloid equation. 

\begin{thm}
\label{thm:upto7}
Up to isomorphism, there are 924,071 $L$-algebras of size $\leq7$.
\end{thm}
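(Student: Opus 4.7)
The plan is to carry out a computer-assisted enumeration that stratifies $L$-algebras by their underlying poset. By the definition of an $L$-algebra and the characterization \eqref{eq:poset_intro}, every $L$-algebra on $n$ points has a canonical skeleton: a finite poset with a greatest element $e$. Hence I would iterate $n$ from $1$ to $7$ and, for each $n$, enumerate all posets of size $n$ with greatest element up to isomorphism (standard orderly generation or existing catalogues suffice for $n \leq 7$).

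For each such poset $P$ with greatest element $e$, I would set up a constraint satisfaction problem whose variables are the values $x\cdot y$ for all pairs $(x,y) \in P\times P$ with $x \not\leq y$, the remaining entries being forced to equal $e$. The constraints are: (i)~the cycloid equation for every triple $(x,y,z)$; (ii)~the remaining axioms of Definition~\ref{defn:L} that involve $e$; and (iii)~the consistency condition $x\cdot y = e \Leftrightarrow x\leq y$, which guarantees that the partial order recovered \emph{a posteriori} from the resulting $L$-algebra coincides with $P$. A CSP solver then enumerates all admissible binary tables, taking advantage of the fact that most entries are either fixed or constrained by comparability in $P$.

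To avoid overcounting, I would quotient the set of solutions by the natural action of $\Aut(P)$ on the table by simultaneous relabelling of rows and columns: two admissible operations on the same $P$ yield isomorphic $L$-algebras if and only if they lie in the same $\Aut(P)$-orbit, because every $L$-algebra isomorphism preserves $e$ and hence the derived poset. Summing the number of orbits over all isomorphism classes of posets $P$ of size $n$ with greatest element, and then over $1 \leq n \leq 7$, must give $924071$.

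The main obstacle will be computational scalability at $n=7$: both the number of base posets and, for posets with many incomparable pairs, the number of CSP variables grow quickly, and orbit enumeration under $\Aut(P)$ becomes the bottleneck. I would address this with symmetry-breaking lex-leader constraints inside the CSP (an idiom used in the approach of \cite{MR4405502} referenced in the introduction), combined with propagation of the cycloid equation to prune as early as possible; independent cross-checks against small-$n$ hand computations and a tabulation by underlying poset would provide confidence in the final total.
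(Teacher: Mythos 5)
Your approach is sound, but it is not the route the paper takes for this particular theorem. For size $\leq 7$ the paper runs a single constraint model per $n$ over the \emph{whole} multiplication table: it encodes the axioms of Definition~\ref{defn:L} as Conditions (1)--(5) on a matrix $M$ with entries in $\{1,\dots,n\}$ (with the logical unit fixed as $n$), and removes isomorphic duplicates by the lex-leader condition $M\leq_{\operatorname{lex}}M^{g}$ imposed for \emph{all} $g\in\Sym_{n-1}$; no stratification by the natural order \eqref{eq:poset} is used at this size. What you propose --- first enumerating posets with greatest element up to isomorphism, then solving a CSP per poset with the entries $x\cdot y=e$ forced exactly when $x\leq y$, and breaking symmetry only over $\Aut(P)$ --- is essentially the refinement the paper introduces later for the harder size-eight computation (Theorem~\ref{thm:eight} and Table~\ref{tab:runtime}), and your justification that solutions on a fixed $P$ are isomorphic iff they lie in the same $\Aut(P)$-orbit (since any isomorphism fixes $e$ and hence preserves the derived order) is exactly the argument the paper uses to validate that restricted symmetry breaking. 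The trade-off is the one the paper itself identifies: your decomposition shrinks the symmetry group per case and splits the search, which pays off when $\Aut(P)$ is small, but it buys little for the discrete (antichain) poset, where $\Aut(P)=\Sym_{n-1}$ and the order forces almost no table entries --- this is precisely why the paper needs the further machinery of approximate semilattices at $n=8$, while at $n\leq 7$ both your method and the paper's direct one are feasible. One caveat: as with the paper's proof, your argument establishes the correctness of an enumeration scheme; the specific value $924071$ can only come from actually executing it (ideally cross-checked against the direct method), not from the scheme itself.
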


Since Hilbert algebras \cite{MR0199086} are particular
cases of $L$-algebras, we also obtain the construction of
small Hilbert algebras up to isomorphism. 

\begin{thm}
\label{thm:Hilbert}
    Up to isomorphism, there 
    are 1,085,642 
    Hilbert algebras of size $\leq10$. 
\end{thm}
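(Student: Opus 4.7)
The plan is to extend the CSP-based construction behind Theorem \ref{thm:upto7} to the proper subclass of Hilbert algebras. The additional axioms defining a Hilbert algebra inside the $L$-algebra world are sharp enough to prune the search space and allow the enumeration to reach size $10$. The architecture of the argument mirrors the $L$-algebra case: first enumerate the possible underlying posets up to isomorphism, then on each poset solve a constraint satisfaction problem whose solutions are exactly the compatible Hilbert multiplications, and finally quotient by the automorphism group of the poset to obtain a transversal of isomorphism classes.

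First, for each $n \leq 10$, I would enumerate representatives of isomorphism classes of posets on $n$ points having a greatest element $e$, either via an existing database or via a canonical-labelling generator. By \eqref{eq:poset_intro}, every Hilbert algebra determines such a poset, and two Hilbert algebras built on non-isomorphic posets cannot be isomorphic, so this stratification loses nothing. On each representative $P$ I would set up a CSP whose variables are the entries $T[x][y]$ of the multiplication table, subject to $T[x][y] = e \iff x \leq y$ and to the Hilbert identities, for instance $x \cdot (y \cdot x) = e$ and $(x \cdot (y \cdot z)) \cdot ((x \cdot y) \cdot (x \cdot z)) = e$, in addition to the cycloid equation. These identities propagate aggressively inside a solver, which is precisely what enables the jump from $n = 7$ in Theorem \ref{thm:upto7} to $n = 10$ here. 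Finally, the group $\Aut(P)$ acts on the set of solutions and a transversal of orbits yields the isomorphism classes of Hilbert algebras on $P$; summing over posets and over $n$ produces the stated count.

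The main obstacle is computational rather than conceptual. At $n = 9, 10$ both the number of posets with a top element and the number of candidate multiplication tables grow rapidly, so keeping the solver within practical memory and time budgets requires careful modelling: a good variable and value ordering, symmetry-breaking by $\Aut(P)$ built into the search rather than applied only in post-processing, and an arrangement of the Hilbert identities that maximises constraint propagation early in the search tree. Correctness reduces to checking the propagation rules against the axioms of a Hilbert algebra, which is routine; the delicate engineering step is scaling the pipeline so that the full count at $n = 10$ becomes feasible.
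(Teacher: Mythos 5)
Your proposal is correct in substance, but it takes a different decomposition from the paper's proof of this particular theorem. The paper obtains the Hilbert counts as a minor variation of the proof of Theorem \ref{thm:upto7}: one global constraint model per size $n$, whose variables are the entries of the multiplication table, subject to the $L$-algebra conditions (1)--(5), the additional identity $x\cdot(y\cdot z)=(x\cdot y)\cdot(x\cdot z)$ of \eqref{eq:Hilbert}, and lex-leader symmetry breaking $M\leq_{\operatorname{lex}}M^{g}$ over \emph{all} $g\in\Sym_{n-1}$; the database is constructed up to size $9$, and at size $10$ the isomorphism classes are only counted, not stored. Your plan instead stratifies by the natural poset and breaks symmetry only by $\Aut(P)$, which is essentially the strategy the paper reserves for the harder size-eight $L$-algebra computation. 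Both routes are sound: isomorphisms preserve the order \eqref{eq:poset}, so the stratification is exact, and on a fixed labelled poset any isomorphism between two structures is a poset automorphism, so an $\Aut(P)$-transversal of solutions yields exactly the classes with that poset. What your route buys is parallelism over independent subcases and far fewer symmetry-breaking constraints (typically $|\Aut(P)|$ is tiny compared with $(n-1)!=362880$ at $n=10$); what the paper's route buys is simplicity (no poset generation or canonical labelling), the strong propagation of the Hilbert identity already making the direct search feasible at $n=10$. Two small cautions for your version: as the paper itself remarks for the poset-wise method, an $\Aut(P)$-lex-minimal table need not be $\Sym_{n-1}$-lex-minimal, so cross-case duplicates are avoided only because your poset representatives are pairwise non-isomorphic (your plan does ensure this); and your ``for instance'' axioms $x\cdot(y\cdot x)=e$ and $(x\cdot(y\cdot z))\cdot((x\cdot y)\cdot(x\cdot z))=e$ are equivalent to the defining identity \eqref{eq:Hilbert} only via the classical theorem that Hilbert algebras satisfy full self-distributivity, so it is both safer and cheaper in the model to impose the equality of \eqref{eq:Hilbert} directly.
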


The method developed
to prove Theorem \ref{thm:upto7} cannot be applied for $L$-algebras of size eight. This happens because the search space concerning $L$-algebras of size eight is 
``too large". Additionally, checking lexicographical symmetry breaking for the whole symmetric group $\Sym_{n-1}$ is already computationally expensive for $n=8$. For these reasons, the enumeration
problem of $L$-algebras of size eight requires new ideas. 

\begin{thm}\
\label{thm:eight}
    Up to isomorphism, there are 377,322,225 
    $L$-algebras of size eight. 
\end{thm}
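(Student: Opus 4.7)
The plan is to scale up the computational strategy already used for Theorem~\ref{thm:upto7}: the statement is not structural but an exact count, so the ``proof'' consists of an algorithm together with a careful analysis of its completeness and of the redundancy removal. The guiding idea is to stratify the enumeration by the underlying poset. By \eqref{eq:poset_intro}, every $L$-algebra on a set of size eight induces a poset with a greatest element $e$, and two $L$-algebras whose induced posets are non-isomorphic are themselves non-isomorphic. Hence the count decomposes as a sum over isomorphism classes of posets on eight elements with a top element. Removing the top gives a bijection with posets on seven elements, of which there are $2045$, so the outer loop is short and can be precomputed once.

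For each such poset $P$, I would encode ``$L$-algebra structure compatible with $P$'' as a constraint satisfaction problem whose variables are the entries $x \cdot y$ for pairs with $x \not\leq y$, each ranging over $P \setminus \{e\}$. The constraints are the boundary conditions ($e \cdot x = x$, $x \cdot e = e$, $x \cdot x = e$), the poset compatibility $x \cdot y = e \Leftrightarrow x \leq y$, the remaining axioms from Definition~\ref{defn:L}, and the cycloid equation, which is the main pruner since it links three positions of the multiplication table at once. Backtracking with unit propagation on the cycloid identity should drastically shrink the search tree, and the sparse entries forced by the poset order give many initial clauses for free.

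For isomorph rejection, I would quotient each per-poset search by $\Aut(P)$ acting on the variables: since any $L$-algebra isomorphism is in particular a poset isomorphism, two solutions lie in the same isomorphism class of $L$-algebras if and only if they lie in the same $\Aut(P)$-orbit on completed multiplication tables. Because $\Aut(P)$ is typically small, I would generate all solutions, canonize them (for example by minimizing the table under $\Aut(P)$), and count the canonical representatives; for posets with large automorphism groups one can alternatively break symmetries on the fly via lexicographic constraints on orbit representatives of the domain.

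The main obstacle is sheer scale: $377{,}322{,}225$ objects will not survive a naive implementation, so the key engineering steps are to choose a variable ordering that maximises cycloid propagation, to parallelize across the $2045$ posets (the largest antichains and the Boolean lattice being the expected bottlenecks), and to cache partial tables shared across isomorphic subproblems. To guard against the ever-present risk of a silent bug in such a large search, I would run the enumeration with two independent CSP encodings, cross-check totals for size $\leq 7$ against Theorem~\ref{thm:upto7}, and cross-check selected per-poset counts (antichain, chain, fence, Boolean lattice) against independent direct computations; the database itself should be archived so the count can be verified a posteriori by any third party.
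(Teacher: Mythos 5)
Your stratification by the induced poset, with per-poset CSP search and lexicographic symmetry breaking by $\Aut(P)$, is exactly the paper's starting point (and it is what the paper actually uses for $2043$ of the $2045$ posets, see Proposition \ref{pro:non_discrete}). But the paper explicitly found that this is \emph{not} sufficient for size eight, and your generic mitigations (parallelization, variable ordering, caching) do not address the reason why. The two killer cases are precisely the ones you flag as ``bottlenecks'': the antichain (discrete $L$-algebras, $149390095$ classes) and the diamond poset of Figure \ref{fig:diamond} ($138219543$ classes). For the antichain the order relation forces almost nothing in the multiplication table ($x\cdot y=e$ only when $x=y$ or $y=e$), so the cycloid constraint has very little to propagate from, while the symmetry group is all of $\Sym_7$; your ``generate all solutions and canonize under $\Aut(P)$'' variant is hopeless there (orbits of size up to $5040$ on top of $1.5\cdot 10^8$ classes), and even on-the-fly lex-leader breaking over $\Sym_7$ on an essentially unconstrained $8\times 8$ table is what the authors report as intractable.

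The missing idea is to refine the stratification beyond the poset, using an invariant coming from Rump's $\wedge$-closure $C(X)$ (Theorem \ref{thm:Rump2}): the paper introduces \emph{separating $k$-approximate semilattices}, which record which relations $x_1\wedge\cdots\wedge x_l\leq y$ hold in $C(X)$ for $l\leq k$, expressible purely in terms of $\cdot$ via $(x_1\wedge x_2)\cdot z=(x_1\cdot x_2)\cdot(x_1\cdot z)$, etc. For the discrete case they enumerate separating $3$-approximate semilattices satisfying a restricted lower semimodularity condition (justified by Rump's structure theorem for $C(X)$ of a discrete $L$-algebra), which splits the antichain into $73$ subcases, each with far more initial constraints and far less residual symmetry; for the diamond they use a coarse $2$-approximate analogue (symmetric $0$--$1$ matrices $R$ recording when $i\wedge j$ lies below everything), giving $156$ subcases (Propositions \ref{pro:discrete} and \ref{pro:diamond}, Table \ref{tab:runtime}). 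Without some such case-splitting invariant that is preserved by isomorphism and constrains the table inside the high-symmetry posets, your plan is correct in principle but would not terminate in practice, so as a proof of the stated count it has a genuine gap: the feasibility argument, which for a computational theorem is the substance of the proof, is not supplied for the two dominant cases.
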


A detailed account regarding the number of 
isomorphism classes of $L$-algebras and Hilbert algebras of size $n$ 
appears in Tables \ref{tab:L_algebras}
and \ref{tab:hilbert}.

\begin{table}[h]
\caption{The number of isomorphism classes of (discrete) 
$L$-algebras of size $\leq8$.}
\begin{tabular}{|r|cccccccc|}
\hline
$n$ & 1 & 2 & 3 & 4 & 5 & 6 & 7 & 8\tabularnewline
\hline 
$L(n)$ & 1 & 1 & 5 & 44 & 632 & 15582 & 907,806 & 377,322,225\tabularnewline
discrete & 1 &1  & 3 & 15 & 101 & 1241 & 151,846 & 149,390,095\tabularnewline
\hline
\end{tabular}
\label{tab:L_algebras}
\end{table}

\begin{table}
\caption{The numbers of isomorphism classes of Hilbert algebras of size $\leq10$.}
\begin{tabular}{|r|cccccccccc|}
\hline
$n$ & 1 & 2 & 3 & 4 & 5 & 6 & 7 & 8 & 9 & 10\tabularnewline
\hline 
Hilbert & 1 & 1 & 2 & 6 & 21 & 95 & 550 & 4036& 37602 & 1,043,328\tabularnewline
\hline
\end{tabular}
\label{tab:hilbert}
\end{table}

As a by-product, we produce a database of finite $L$-algebras and
Hilbert algebras. 

The proofs of Theorems \ref{thm:upto7}, 
\ref{thm:Hilbert} and \ref{thm:eight} use constraint programming techniques. 
Constraint programming
is a paradigm for solving combinatorial problems. The idea is
to search for variables 
that satisfy a certain number of constraints.

The main tools used are the constraint modeling assistant 
Savile Row \cite{nightingale2017automatically} and 
\cite{gap4}. Savile Row is a flexible modelling assistant 
for constraint programming; it provides a high-level language
fitting perfectly with our particular combinatorial problem.

The databases produced are available at 
\url{https://github.com/vendramin/L}, 
with DOI name \href{https://doi.org/10.5281/zenodo.6630229}{10.5281/zenodo/6630229}.

The calculations of Theorems \ref{thm:upto7}, \ref{thm:Hilbert}
and \ref{thm:eight} were
performed 
in an 
Intel(R) Core(TM) i7-8700 CPU @ 3.20GHz, with 32GB RAM. 

\begin{table}[ht]
\caption{The construction of isomorphism classes of 
$L$-algebras of size eight.}
\begin{tabular}{|r|c|cc|}
\hline
\multicolumn{1}{|l|}{} & discrete  & \multicolumn{2}{c|}{non-discrete}         \\ \hline
Poset                  & trivial   & \multicolumn{1}{c|}{diamond}   & other    \\ \hline
Nr. cases              & 73        & \multicolumn{1}{c|}{156}       & 2043     \\ \hline
Nr. solutions          & 149,390,095 & \multicolumn{1}{c|}{138,219,543} & 89,712,587 \\ \hline
Run-time               & 4 days    & \multicolumn{1}{c|}{5 hours}   & 11 hours \\ \hline
Data (compressed)      & 208MB     & \multicolumn{1}{c|}{212MB}     & 153MB    \\ \hline
\end{tabular}
\label{tab:runtime}
\end{table}

By inspection of the database, we find connections involving certain finite families of 
$L$-algebras and well-known combinatorial objects. 
Two families of $L$-algebras are particular relevant 
for this work. 
We prove that finite \emph{linear}  
$L$-algebras (Definition \ref{defn:linear}) are enumerated by Bell numbers 
(Theorem \ref{thm:linear}). 
We also prove that the elements of 
another important family of
finite $L$-algebras, that of \emph{regular} $L$-algebras (Definition \ref{defn:regular}),  
are in bijective correspondence with 
infinite-dimensional Young diagrams (Theorem~\ref{thm:regular}). 

%\medskip
%The paper is organized as follows. In Section \ref{construction} we
%review basic definitions and present a combinatorial
%presentation of finite $L$-algebras. This idea is used to 
%construct isomorphism classes of $L$-algebras by 
%means of constraint programming techniques. In Section \ref{linear}
%we study linear $L$-algebras and prove that for each $n$
%there are $B(n-1)$ isomorphism classes of $L$-algebras of size $n$, 
%where $B(k)$ denotes the $k$-th Bell number (Theorem \ref{thm:linear}). 
%In Section \ref{regular} we study regular $L$-algebras. The main 
%result in this section is the existence of a bijective
%correspondence between finite regular $L$-algebras and infinite-dimensional
%Young diagrams (Theorem \ref{thm:regular}). 

\section{Preliminaries}
\label{preliminaries}

%\subsection{}

We refer to \cite{MR2437503} for the basic theory of $L$-algebras. 

\begin{defn}
\label{defn:L}
An $L$-algebra is a set $X$ together with a function $X\times X\to X$,
$(x,y)\mapsto x\cdot y$, such that
\begin{enumerate}
    \item there exists an element $e\in X$ such that $e\cdot x=x$ and $x\cdot e=x\cdot x=e$ for all $x\in X$, 
    \item $(x\cdot y)\cdot (x\cdot z)=(y\cdot x)\cdot (y\cdot z)$ holds for all $x,y,z\in X$, and
    \item if $x\cdot y=y\cdot x=e$, then $x=y$. 
\end{enumerate}
\end{defn}

The element $e$ is unique and is called the \emph{logical unit} of $X$. 

\begin{defn}
A \emph{Hilbert algebra} is an $L$-algebra $X$ such that 
\begin{equation}
    \label{eq:Hilbert}
    x\cdot (y\cdot z)=(x\cdot y)\cdot (x\cdot z)
\end{equation}
holds for all $x,y,z\in A$.  
\end{defn}

\begin{defn}
If $X$ and $Y$ are $L$-algebras, 
a homomorphism of 
$L$-algebras is a map $f\colon X\to Y$ such that
$f(x\cdot y)=f(x)\cdot f(y)$ for all $x,y\in X$.
\end{defn}

The derived notions of \emph{isomorphism} and \emph{automorphism} are clear from this definition.
If $f\colon X\to Y$ is a homomorphism of $L$-algebras,
then $f(e)=e$. It follows that the 
automorphism group of an $L$-algebra
$X$ of size $n$ is isomorphic to a subgroup of $\Sym_{n-1}$, the symmetric
group on $n-1$ letters. 

It can be shown \cite[Proposition 2]{MR2437503}
that for an $L$-algebra $X$, the relation $\leq$ given~by
\begin{equation}
    \label{eq:poset}
    x \leq y \Longleftrightarrow x \cdot y = e
\end{equation}
is a partial order relation. We will call this relation the \emph{natural} partial order relation on $X$.
Note that from this definition we immediately see that $e$ is the unique maximal element of $X$.

Given elements $x,y,z \in X$ with $y \leq z$, we can calculate
\[
(x \cdot y) \cdot (x \cdot z) = (y \cdot x) \cdot (y \cdot z) = (y \cdot x) \cdot e = e,
\]
which implies that $x \cdot y \leq x \cdot z$, as well, so the assignments $y \mapsto x \cdot y$ are monotone for all $x \in X$.

One case of natural order is of particular importance in this article.

\begin{defn}
\label{defn:discrete}
    An $L$-algebra $X$ is said to be \emph{discrete} if 
    $x \leq y $ if and only if $x=y$ or $y=e$. 
\end{defn}

This means, an $L$-algebra is discrete when the order on $X \setminus \{ e \}$ is discrete.

%Heyting algebras form an interesting subfamily of Hilbert algebras. 
%Finite Heyting algebras
%are in bijective correspondence with
%finite distributive lattices, so the number of 
%isomorphism classes of Heyting algebras is known, see 
%\cite{MR1912806}. %

%%\begin{defn}
%%A \emph{CL-algebra} is an $L$-algebra $X$ such that 
%%\begin{equation}
%\label{eq:CL}
%(x\cdot (y\cdot z))\cdot (y\cdot (x\cdot z))=e
%\end{equation}
%for all $x,y,z\in X$. 
%\end{defn}
%
%\begin{defn}
%A \emph{KL-algebra} is an $L$-algebra $X$ such that
%\begin{equation}
%    \label{eq:KL}
%    x\cdot (y\cdot x)=1 
%\end{equation}
%for all $x,y\in X$. 
%\end{defn}

\subsection{A combinatorial description of finite $L$-algebras}

Let $n\geq2$ and $X$ be a finite $L$-algebra of size $n$. 
Without loss of generality we may assume that $X=\{1,\dots,n\}$, where   
$n$ is the logical unit. The $L$-algebra structure on $X$ is then 
described by the 
matrix $M=(M_{i,j})\in\Z^{n\times n}$, where
\[
    M_{i,j}=i\cdot j,\quad i,j\in\{1,\dots,n\}.
\]
The matrix $M$ satisfies the following conditions:
\begin{enumerate}
	\item $M_{n,j}=j$ for all $j\in\{1,\dots,n\}$, 
	\item $M_{i,n}=n$ for all $i\in\{1,\dots,n\}$, 
	\item $M_{k,k}=n$ for all $k\in\{1,\dots,n\}$, and  
	\item $M_{M_{i,j},M_{i,k}}=M_{M_{j,i},M_{j,k}}$ holds for all $i,j,k\in\{1,\dots,n\}$. 
	\item if $M_{i,j}=n$ and $M_{j,i}=n$, then $i=j$, 
\end{enumerate}
There is a bijective correspondence between 
$L$-algebras and matrices satisfying Conditions (1)--(5); 
see Example \ref{exa:L} for a concrete
example of a matrix representation of an $L$-algebra.

Since $n$ is the logical unit, $g(n)=n$ holds for each automorphism $g$ of the $L$-algebra. 
Over the set of $n\times n$ matrices satisfying Conditions (1)--(5) 
we define the equivalence relation given by
\[
M\sim N\Longleftrightarrow \text{there exists }g\in\Sym_{n-1}\text{ such that }
N_{i,j}=g^{-1}\left(M_{g(i),g(j)}\right)
\]
for all $i,j\in\{1,\dots,n\}$.
It follows that 
the $L$-algebras $X$ and $Y$ are isomorphic if and only if
$M_X\sim M_Y$. 

\begin{exa}
\label{exa:L}
    Let $X$ be the $L$-algebra structure over $\{x,y,e\}$ 
    with logical unit $e$ given by
    \[
    e\cdot y=y,\quad 
    x\cdot y=y\cdot x=e\cdot x=x.
    \]
    Let $f\colon \{1,2,3\}\to\{x,y,e\}$ be the bijective
    map given by $f(1)=x$, $f(2)=y$ and $f(3)=e$.  
    Then $X$ can be represented as the matrix 
    \[
    M_X=\begin{pmatrix}
    3 & 1 & 3\\
    1 & 3 & 3\\
    1 & 2 & 3
    \end{pmatrix}.
    \]
    Note that, in this new representation of $X$, 
    the logical unit is the element $3$. 
\end{exa}

\section{Proofs of Theorems \ref{thm:upto7}, \ref{thm:Hilbert} and \ref{thm:eight}}

\subsection{Proof of Theorems \ref{thm:upto7} and \ref{thm:Hilbert}}

To construct all $L$-algebras of size $n\leq 7$ we need to find all possible 
matrices $M\in\Z^{n\times n}$ with
coefficients in $\{1,2,\dots,n\}$ that satisfy Conditions (1)--(5). 
We use 
Savile Row 
\cite{nightingale2017automatically} to find all matrices with entries in $\{1,\dots,n\}$ 
satisfying Conditions (1)--(5). 

To remove repetitions we proceed as follows.  
If $g\in\Sym_{n-1}$ and $M$ is a matrix, we
denote by $M^g$ the matrix given by
\[
	(M^g)_{i,j}=g^{-1}\left(M_{g(i),g(j)}\right)
\]
for all $i,j\in\{1,\dots,n\}$.
We need to find those matrices
$M$ such that
\begin{equation}
	\label{eq:lex}
	M\leq_{\operatorname{lex}} M^g
\end{equation}
for all $g\in\Sym_{n-1}$, where
$A\leq_{\operatorname{lex}}B$ if and only if  
\begin{multline*}
(A_{1,1},A_{1,2},\dots,A_{1,n},A_{2,1},A_{2,2},\dots,A_{n,n})\\
\leq
(B_{1,1},B_{1,2},\dots,B_{1,n},B_{2,1},B_{2,2},\dots,B_{n,n})
\end{multline*}
with lexicographical order.  

Our method produces a database (159MB) of isomorphism classes of
$L$-algebras of size $\leq7$ in about 6 hours. 

Minor variations of our algorithm produce database of particular classes
of $L$-algebras such as Hilbert algebras. A database (11MB) 
of isomorphism classes of Hilbert algebras
of size $\leq9$ was constructed in about two hours. 
Isomorphism classes of Hilbert algebras of size
10 were only enumerated, not constructed. 

\subsection{Proof of Theorem \ref{thm:eight}}

The approach used in the proof of Theorem \ref{thm:upto7} does not work
for $L$-algebras of size eight. As is already indicated in the introduction, the main problems we are confronted with are: 
\begin{enumerate}
\item The vast size of the search space for $n=8$: from the $64$ entries $M_{i,j}$, only $22$ (the last row, the last column and the diagonal) follow from the $L$-algebra axioms. Therefore, the search space contains $8^{42}$ elements.
\item The lexicographical symmetry breaking is very expensive as there are potentially $7! = | \Sym_{8-1} |$ permutations to consider in order to determine if a matrix is lexicographically least in its conjugacy class. However, there is a significant amount of $L$-algebras that have a trivial or, at least, small automorphism group. An elimination of the lexicographical symmetry breaking for some of these asymmetric $L$-algebras is therefore expected to accelerate the search process significantly.
\end{enumerate}

Since 
each $L$-algebra has the structure of a poset under \eqref{eq:poset}, 
we can 
split the problem of constructing all non-isomorphic $L$-algebras of size $n$ into the construction 
of all non-isomorphic $L$-algebras on a given poset $(X,\leq)$. 

Note that a poset of size $n$ on the set $\{1,\ldots,n \}$ with maximal element $n$ can be described by a matrix $P = (P_{i,j}) \in \{0,1 \}^{(n-1) \times (n-1)}$, where
\[
P_{i,j} = 1 \Longleftrightarrow i \leq j.
\]
Given a matrix $P$ describing the poset $(X,\leq)$, 
the condition that a matrix $M_{i,j}$ is an $L$-algebra on $(X,\leq)$ can then be rewritten as
\[
M_{i,j} = n \Longleftrightarrow P_{i,j}=1 \textnormal{ for all } i,j \in \{1,\ldots,n-1 \},
\]
which can easily be translated to a constraint.

To split the search space of all $L$-algebras of size $n$, we first compute 
all non-isomorphic posets on a set of size $n-1$ and 
then compute the non-isomorphic $L$-algebras on each of them separately.
Here, we slightly modify the lexicographic condition by not searching for the lexicographically smallest matrix with respect to all symmetries of the set $\{1, \ldots,n-1 \}$ but only with respect to automorphisms of the poset $(X,\leq)$ which, in cases of posets with few automorphisms, accelerates the search significantly. Note that this does not necessarily lead to a multiplication table that is the lexicographically least with respect to all of $\Sym_{n-1}$.

Unfortunately, searching by posets is not enough when computing $L$-algebras of size eight, as in the case of a discrete $L$-algebra we do not have enough restrictions for the entries in a multiplication table and also there are too many symmetries to consider.

In order to explain our approach, we need to repeat the definition of 
$\wedge$-semibraces \cite[Definition 3]{MR2442072}
and introduce the rather technical notion of a \emph{separating $k$-approximate semilattice}.

\begin{defn}
A $\wedge$-semibrace is a set $X$ with a distinguished element $e \in X$ and binary operations $\wedge, \cdot: X \times X \to X$, such that the pair $(X,\wedge)$ is a commutative monoid with neutral element $e$ and the following identities hold 
for all $x,y,z\in X$: 
\begin{enumerate}
    \item $e \cdot x = x$,  
    \item $x \cdot e = e$, 
    \item $x \cdot (y \wedge z) = (x \cdot y) \wedge (x \cdot z)$, and 
    \item $(x \wedge y) \cdot z = (x \cdot y) \cdot (x \cdot z)$.
\end{enumerate}
\end{defn}

We remark here that the commutativity of the monoid $(X,\wedge)$, together with Equation (4), implies that the pair $(X, \cdot)$ fulfils the cycloid equation.

\begin{thm}[Rump]
\label{thm:Rump2}
Each $L$-algebra $X$ has a unique $\wedge$-closure $C(X)$ such that 
%= (C(X),\wedge,\cdot,e)$ such that
\begin{enumerate}
    \item $C(X)$ is a semibrace that is also an $L$-algebra with logical unit $e$,
    \item $C(X)$ contains $X$ as a sub-$L$-algebra, and 
    \item the monoid $(C(X), \wedge,e)$ is a semilattice generated by the subset $X$.
\end{enumerate}
\end{thm}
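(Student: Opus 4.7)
The plan is to treat uniqueness and existence separately. Uniqueness is essentially forced by the generation property in (3), while existence requires a careful extension of the multiplication on $X$ to the free semilattice it generates.

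For uniqueness, suppose that $C$ and $C'$ both satisfy (1)--(3). By (3), every element of $C$ can be written as $x_1 \wedge \cdots \wedge x_k$ with $x_i \in X$. The semibrace axioms then determine the multiplication on such expressions: axiom (3) gives
\[
x \cdot (y_1 \wedge \cdots \wedge y_m) = (x \cdot y_1) \wedge \cdots \wedge (x \cdot y_m),
\]
while axiom (4), combined with the commutativity of $\wedge$ and applied inductively to the left factor, rewrites $(x_1 \wedge \cdots \wedge x_k) \cdot y$ as a meet of elements already computable in $X$. Thus the full $\wedge$-semibrace structure on $C$ is determined by its restriction to $X$, and the identity on $X$ extends uniquely to an isomorphism $C \to C'$.

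For existence, I would start from the free commutative idempotent monoid $F$ on $X$ with neutral element $e$, i.e., the set of finite subsets of $X \cup \{e\}$ equipped with union as $\wedge$. Viewing $X$ as singletons inside $F$, I would define a binary operation $\cdot$ on $F$ using precisely the reduction formulas extracted in the uniqueness argument, and set $C(X) = F/{\sim}$, where $\sim$ is the smallest congruence forcing $(C(X), \wedge, \cdot, e)$ to be a $\wedge$-semibrace that is also an $L$-algebra extending $X$.

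The main obstacle is to show that nothing collapses in this quotient: the canonical map $X \to C(X)$ must remain injective, and the semibrace and $L$-algebra axioms must actually hold. Well-definedness of $\cdot$ reduces, after an induction on the number of $\wedge$-factors, to the cycloid equation on $X$ together with the idempotency and commutativity of $\wedge$; this part is tedious but essentially mechanical. The delicate point is the preservation of axiom (5) from Definition \ref{defn:L}: one needs that if $u \cdot v = v \cdot u = e$ in $C(X)$ for meets $u, v$ of elements of $X$, then $u = v$. I expect this to be the hardest step, because the $\wedge$-structure could a priori create new equalities $u \cdot v = e$ that do not arise from $X$. Ruling this out requires identifying a concrete partial order on $F$ compatible with the defined $\cdot$ and reducing to the antisymmetry of the natural order on $X$ given by \eqref{eq:poset}. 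Once (5) is in place, property (2) and the universal property underlying the uniqueness statement follow formally.
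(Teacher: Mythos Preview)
The paper does not actually prove this theorem: its entire proof is the single line ``See \cite[Theorem 3]{MR2442072}'', deferring to Rump's original article. There is therefore no in-paper argument to compare your proposal against.

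On its own merits, your outline is reasonable but incomplete. The uniqueness half is correct and essentially forced, as you say. For existence, the plan of passing to the free $\wedge$-semilattice on $X$, defining $\cdot$ recursively via the semibrace identities, and then quotienting is the natural universal-algebra move, but you openly leave the crux unresolved: you do not show that the quotient preserves injectivity of $X \hookrightarrow C(X)$ or that axiom~(3) of Definition~\ref{defn:L} (antisymmetry) survives. Phrasing the construction as ``quotient by the smallest congruence making it an $L$-algebra'' is close to circular without an independent description of that congruence, since the thing to be proved is precisely that this congruence is not too large.

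The paragraphs immediately following the theorem in the paper suggest a more concrete handle: elements of $C(X)$ are represented by closures $\overline{A} = \{ x \in X' : (\bigwedge A) \leq x \}$, and the semilattice structure is encoded by a separating closure operator on $\mathcal{P}(X')$. Working directly with closed sets, rather than with an abstract quotient, gives an explicit model in which injectivity of $X \hookrightarrow C(X)$ is automatic (distinct elements of $X'$ have distinct closures by condition~(4)) and antisymmetry can be checked against the natural order on $X$. If you want a self-contained proof rather than a citation, that is the cleaner route to pursue.
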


\begin{proof}
    See \cite[Theorem 3]{MR2442072}. 
\end{proof}

We note here that, as $e$ is the top element of $X$, the subset $X^{\prime} = X \setminus \{ e \}$ generates the subsemilattice $C(X) \setminus \{ e \}$. 
Using Theorem \ref{thm:Rump2}, the most obvious approach would now be to first construct all $\wedge$-semilattices generated by an $(n-1)$-element set $X^{\prime}$ and then search for all $L$-algebras on $X := X^{\prime} \cup \{ e \}$ with logical unit $e$ that are consistent with the $\wedge$-structure in a sense that we will now explain:

Every semilattice is also a poset under the relation 
\[
x \leq y \Longleftrightarrow x \wedge y = x.
\]
As $C(X)$ is generated by $X$, each element $y \in C(X)$ can be written as
\[
y = \bigwedge \{ x \in X^{\prime}: y \leq x \},
\]
where $\bigwedge \emptyset = e$. Therefore, each element $y \in C(X)$ can be characterized by the set of all elements above $y$ that are lying in $X^{\prime}$.

Note that expressions of the form $(x_1 \wedge x_2 \wedge \ldots \wedge x_k) \cdot z$ with $x_i \in X$ ($i=1, \ldots,k$) and $z \in X$ can be rewritten in terms of the $L$-algebra operation only. For example, 
\begin{align*}
    (x_1 \wedge x_2) \cdot z & = (x_1 \cdot x_2) \cdot (x_1 \cdot z), \\
    (x_1 \wedge x_2 \wedge x_3) \cdot z & = ((x_1 \wedge x_2) \cdot x_3) \cdot ((x_1 \wedge x_2) \cdot z) \\
    & = ((x_1 \cdot x_2) \cdot (x_1 \cdot x_3)) \cdot ((x_1 \cdot x_2) \cdot (x_1 \cdot z)).
\end{align*}
Given a semilattice $(C,\wedge)$ generated by a set $X^{\prime}$, we call an $L$-algebra operation $\cdot$ on $X = X^{\prime} \cup \{ e \}$ \emph{consistent} with the semilattice if the rewritten form of all terms of the form $(x_1 \wedge x_2 \wedge \ldots \wedge x_k) \cdot z$ in terms of $\cdot$ is equal to $e$ if and only if $x_1 \wedge x_2 \wedge \ldots \wedge x_k \leq z$ holds in $C$.

We have seen that if $C$ is a bounded semilattice with top element $e$ where the subsemilattice $C \setminus \{ e \}$ is generated by the subset $X^{\prime}$, then each element $\bigwedge A \in C$, where $A \subseteq C$, can be identified from the set
\[
\overline{A} = \left\{ x \in X^{\prime} : \left( \bigwedge A \right) \leq x \ \right\}.
\]
Note that $\overline{(\bigwedge A) \wedge (\bigwedge B)} = \overline{ \overline{A} \cup \overline{B} }$, so all information about the semilattice $C$ is essentially contained in the operation $A \mapsto \overline{A}$.

This map has the following properties:
\begin{enumerate}
    \item $\overline{\emptyset} = \emptyset$.
    \item for $A \subseteq X^{\prime}$, we have $A \subseteq \overline{A}$.
    \item for $A,B \subseteq X^{\prime}$, we have the implication $A \subseteq \overline{B} \Rightarrow \overline{A} \subseteq \overline{B}$.
    \item The mapping $A \mapsto \overline{A}$ is \emph{separating points}, meaning that for $x,y \in X^{\prime}$, we have the implication $x \neq y \Rightarrow \overline{\{ x\}} \neq \overline{\{ y\}}$.

\end{enumerate}
A map $\mathrm{cl}: \mathcal{P}(X^{\prime}) \to \mathcal{P}(X^{\prime}), A \mapsto \overline{A}$ with these four properties above will be called a \emph{separating closure operator} on $X^{\prime}$. So a possible approach to the enumeration resp. calculation of all $L$-algebras of size $n$ would be to first construct all semilattices on $n-1$ generators by determining the different separating closure operators on a set $X^{\prime}$ of size $n-1$, up to a relabelling of the generators, and then find, for each semilattice, the non-isomorphic $L$-algebra operations on $X = X^{\prime} \cup \{ e \}$ which are consistent with the semilattice structure. Note that this approach in fact splits the search case as the $\wedge$-closure is an invariant of an $L$-algebra.
However, there are two obstacles when trying to use this approach directly:
\begin{enumerate}
    \item There are way too many semilattices generated by an $n$-element set, even for small values of $n$.
    \item The length of the expressions $(x_1 \wedge x_2 \wedge \ldots \wedge x_k) \cdot z$ contains $2^k$ terms $x_i$ or $z$, each one surrounded by $k-1$ brackets. In particular, these expressions do not have fixed length.
\end{enumerate}

In order to deal with this problem, we decided to only compute part of the semilattice structures and demand consistency with only this part of the semilattice structure. This leads to the notion of a \emph{separating k-approximate semilattice}. To define this, we clarify some notation: by $\binom{X}{\leq k}$, we denote the set of all subsets of a set $X$ whose size is $k$ or less.

\begin{defn}
Let $k \geq 1$ and $X$ be a set. A $\emph{separating $k$-approximate semilattice}$ on $X$ is a map $\mathrm{cl}: \binom{X}{\leq k} \to \mathcal{P}(X), A \mapsto \overline{A}$ with the following properties:
\begin{enumerate}
    \item $\overline{\emptyset} = \emptyset$.
    \item For $A \in \binom{X}{\leq k}$, we have $A \subseteq \overline{A}$.
    \item For $A,B \in \binom{X}{\leq k}$, we have the implication $A \subseteq \overline{B} \Rightarrow \overline{A} \subseteq \overline{B}$.
    \item For $x,y \in X$, we have the implication $x \neq y \Rightarrow \overline{\{ x\}} \neq \overline{\{ y\}}$.
\end{enumerate}
\end{defn}

\begin{defn}
If $\mathrm{cl}_1, \mathrm{cl}_2$ are two separating $k$-approximate semilattices on a set $X$, then they are said to be \emph{conjugate}, if there exists a bijection $\pi: X \to X$, such that $\mathrm{cl}_2(\pi(A)) = \pi(\mathrm{cl}_1(A))$ for all $A \in \binom{X}{\leq k}$.
\end{defn}

Note that each separating $1$-approximate semilattice $\mathrm{cl}$ on $X$ determines a partial order on $X$ by declaring $x \leq y : \Longleftrightarrow y \in \mathrm{cl}(\{ x\})$. Also, each partial order on $X$ gives rise to a separating $1$-approximate semilattice by defining $\mathrm{cl}(\emptyset) = \emptyset$ and $\mathrm{cl}(\{ x\}) = \{ y \in X: y \geq x \}$. If $k = |X|$, then a $k$-approximate semilattice is just a closure operator on $X$. So a separating $k$-approximate semilattice on a set $X$ can be seen as something in between a poset on $X$ and a semilattice generated by $X$.

Given a $k$-approximate semilattice on a set $X^{\prime}$ we call an $L$-algebra operation on the set $X = X^{\prime} \cup \{ e \}$ - with $e$ being the logical unit - \emph{consistent} with the approximate semilattice if for $l \leq k$ and elements $x_1, \ldots, x_l,y \in X^{\prime}$, the expression $(x_1 \wedge \ldots \wedge x_l) \cdot y$ evaluates to $e$ (when rewritten in terms of the $L$-algebra operation) if and only if $y \in \overline{\{ x_1, \ldots, x_l \}}$.

For size 8, we use the lexicographical order of matrices with respect to the ordering $(A_{1,1},A_{1,2},A_{2,2},A_{2,1},A_{1,3},A_{2,3},A_{3,3},A_{3,2},A_{3,1}, \ldots, A_{n,n})$, see Figure \ref{fig:matrix_ordering}. For bigger multiplication tables, this ordering is better suited for checking lexicographical symmetry breaking, as most of its initial segments are contained in smaller squares that are invariant under permutations in $\Sym_k$ ($1 \leq k \leq n-1$).

\begin{figure}
\begin{tikzpicture}
\tikzstyle{keepstyle} =[rectangle, rounded corners, draw,  fill=white]
\node (31) at (0,3) {$\bullet$};
\node (21) at (0,4) {$\bullet$};
\node (11) at (0,5) {$\bullet$};
\node (12) at (1,5) {$\bullet$};
\node (22) at (1,4) {$\bullet$};
\node (32) at (1,3) {$\bullet$};
\node (13) at (2,5) {$\bullet$};
\node (23) at (2,4) {$\bullet$};
\node (33) at (2,3) {$\bullet$};
\node (12) at (1,5) {$\bullet$};
\node (13) at (2,5) {$\bullet$};
\draw [-latex, thick] (12) -- (22);
\draw [-latex, thick] (22) -- (21);
\draw [-latex, thick] (13) -- (23);
\draw [-latex, thick] (23) -- (33);
\draw [-latex, thick] (33) -- (32);
\draw [-latex, thick] (32) -- (31);
\node at (2.5,5) {$\cdots$};
\node at (2.5,4) {$\cdots$};
\node at (2.5,3) {$\cdots$};
\node at (0,2.5) {$\vdots$} ;
\node at (1,2.5) {$\vdots$} ;
\node at (2,2.5) {$\vdots$} ;
\node at (2.5,2.5) {$\ddots$} ;

\end{tikzpicture}
\caption{Another ordering of matrix entries.}
\label{fig:matrix_ordering}
\end{figure}
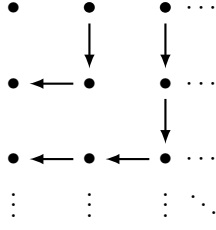 

We now have all tools at our hands to explain how we get our results.

\subsection{Discrete $L$-algebras}

\begin{pro}
\label{pro:discrete}
Up to isomorphism, there are 
149,390,095 discrete 
$L$-algebras of size eight. 
\end{pro}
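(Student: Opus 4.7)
The plan is to apply the splitting strategy developed in the preceding subsection, using the invariant given by a separating $k$-approximate semilattice in place of the poset, since the poset invariant is trivial for discrete $L$-algebras and therefore useless as a case distinction. By Theorem \ref{thm:Rump2}, the $\wedge$-closure $C(X)$ is an isomorphism invariant of $X$, so its truncation to subsets of size at most $k$ (a separating $k$-approximate semilattice on $X^\prime = X\setminus\{e\}$) is also an invariant. The strategy is then:

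First, I would enumerate, up to conjugation, all separating $k$-approximate semilattices on the $7$-element set $X^\prime=\{1,\ldots,7\}$ for a suitable value of $k$ (chosen experimentally as the best compromise between the number of cases and the length of the rewritten consistency constraints $(x_1\wedge\cdots\wedge x_l)\cdot y$). For discrete $L$-algebras this bookkeeping gives the 73 cases reported in Table \ref{tab:runtime}.

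Second, for each conjugacy class representative $\mathrm{cl}$, I would set up a Savile Row model whose variables are the entries $M_{i,j}$ with $i,j\in\{1,\ldots,7\}$, $i\neq j$, constrained to take values in $\{1,\ldots,7\}$ (so that $M_{i,j}\neq 8$, enforcing discreteness), with $M_{i,i}=8$, $M_{i,8}=8$ and $M_{8,j}=j$ fixed. On top of this I would impose the cycloid equation $M_{M_{i,j},M_{i,k}}=M_{M_{j,i},M_{j,k}}$, the consistency with $\mathrm{cl}$ in the sense defined above (i.e.\ each rewritten expression $(x_1\wedge\cdots\wedge x_l)\cdot y$ equals $e$ iff $y\in\overline{\{x_1,\ldots,x_l\}}$), and finally the lexicographical symmetry-breaking inequality $M\leq_{\mathrm{lex}} M^g$ restricted to the stabilizer subgroup of $\mathrm{cl}$ inside $\Sym_7$, using the matrix entry ordering of Figure~\ref{fig:matrix_ordering}. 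The latter ordering is important because its initial segments sit inside $\Sym_k$-invariant top-left corners, which makes the symmetry breaking effective even with the reduced automorphism group.

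Third, I would sum the counts returned by Savile Row over the 73 cases. The main obstacle is not mathematical but computational: the discrete case has the fewest combinatorial constraints and the largest symmetry group to consider for each class, which is why it dominates the running time (roughly four days, cf.\ Table \ref{tab:runtime}), and why naive enumeration without the approximate-semilattice splitting is infeasible. Upon completion, the aggregate of the 73 case counts is exactly $149390095$, proving the proposition.
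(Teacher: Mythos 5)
Your overall skeleton matches the paper's: split the discrete case by a truncation of the $\wedge$-closure of Theorem \ref{thm:Rump2} (a separating $k$-approximate semilattice on the seven non-unit elements), then for each conjugacy-class representative run a constraint model with the discreteness conditions, the cycloid equation, consistency with the approximate semilattice, and lexicographic symmetry breaking restricted to the stabilizer of the representative, using the entry ordering of Figure \ref{fig:matrix_ordering}, and finally sum over the cases. However, there is a genuine gap in how you arrive at the case split. The figure of $73$ cases in Table \ref{tab:runtime} is \emph{not} the number of conjugacy classes of separating $3$-approximate semilattices on a $7$-element set; it is the number of those that in addition satisfy a restricted lower semimodularity condition, namely: if $y \in \mathrm{cl}(\{x_1,x_2,x_3\}) \setminus \mathrm{cl}(\{x_1,x_2\})$ and $z \in \mathrm{cl}(\{x_1,x_2,x_3\})$, then $z \in \mathrm{cl}(\{x_1,x_2,y\})$. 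Without this extra filter the number of cases is vastly larger (already the posets on $7$ points number over two thousand, and the $3$-level data multiplies this), so your claim that the enumeration of separating $k$-approximate semilattices ``gives the 73 cases'' is unjustified, and the feasibility of the four-day computation rests precisely on this reduction.

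Moreover, imposing that filter is only legitimate because of a theorem of Rump (implicit in the proof of \cite[Proposition 18]{MR3373381}, quoted in the paper): for a discrete $L$-algebra $X$, the $\wedge$-closure $C(X)$ is a lower semimodular lattice whose coatoms are exactly $X \setminus \{e\}$. This is the ingredient that guarantees that restricting the search to semimodular approximate semilattices loses no discrete $L$-algebra of size eight; your proposal never invokes it, so as written you must either run the (much larger, likely infeasible) unrestricted case split or supply this theorem to justify the restriction. Two smaller points: the paper fixes $k=3$ rather than leaving $k$ to experiment, and it exploits discreteness to collapse the rewritten consistency expressions into short disjunctions of equalities (e.g.\ $(i \wedge j)\cdot k = e$ becomes $M_{i,j}=M_{i,k}$ or $i=k$), which is what keeps the constraints of bounded length --- a concern you raise implicitly but do not resolve; it also breaks symmetry among the semilattice representatives themselves by choosing the lexicographically greatest closure data.
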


To deal with the discrete case in size eight, 
we make use of the following result. For $x,y\in X$, write $x\prec y$ to denote that $x< y$ and there
is no $z\in X$ such that $x<z<y$.  

\begin{thm}[Rump]
If $X$ is a discrete $L$-Algebra then its $\wedge$-closure $C(X)$ has the property that $X \setminus \{ e \} = \{ y \in C(X) : y \prec e \}$ and is a lower semimodular lattice meaning that $C(X)$ a lattice under the semilattice order the and the implication $a \prec a \vee b \Rightarrow a \wedge b \prec b$ holds in $C(X)$.
\end{thm}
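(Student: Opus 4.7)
The plan is to pass to the $\wedge$-closure $C(X)$ provided by Theorem~\ref{thm:Rump2} and work in that richer ambient structure, in which every element is a meet of finitely many elements of $X':=X\setminus\{e\}$. The first step I would carry out is to verify that on $C(X)$ the semilattice order $x\leq_\wedge y$ (i.e.\ $x\wedge y=x$) coincides with the $L$-algebra order $x\leq_L y$ (i.e.\ $x\cdot y=e$). Taking $z=y$ in axiom (4) of a $\wedge$-semibrace gives $(x\wedge y)\cdot y=e$, so $\leq_\wedge$ refines $\leq_L$; conversely, axiom (3) together with $x\cdot x=e$ yields $x\cdot(x\wedge y)=(x\cdot x)\wedge(x\cdot y)=e$ whenever $x\cdot y=e$, and anti-symmetry of $\leq_L$ then forces $x=x\wedge y$. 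In particular, the two orders agree on the sub-$L$-algebra $X$, which is what lets me use discreteness when computing inside $C(X)$.

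Next, I would establish the identification $X\setminus\{e\}=\{y\in C(X):y\prec e\}$. Any $y\in C(X)\setminus\{e\}$ has the form $\bigwedge A$ for some nonempty $A\subseteq X'$; if $|A|\geq 2$ and $a,b\in A$ are distinct, then $a\wedge b<a$ strictly, because $a\wedge b=a$ would give $a\leq_L b$ and then $a=b$ by discreteness. Hence $y\leq a\wedge b<a<e$, so $y$ is not a coatom and all coatoms lie in $X'$. Conversely, if some $x\in X'$ were not a coatom, a witness $y=\bigwedge A$ with $x<y<e$ would force $x\leq_L a$ for every $a\in A$, hence $x=a$ by discreteness, giving $y=x$ and contradicting $x<y$.

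Because $X$ is finite, so is $C(X)$, and it has top $e$; hence $a\vee b:=\bigwedge\{c\in C(X):c\geq a,\ c\geq b\}$ exists and $C(X)$ is a lattice. The main remaining task is lower semimodularity. Encoding $c\in C(X)$ by its set of coatoms $\widehat c=\{x\in X':c\leq x\}$ turns meets into unions of sets (followed by closure) and joins into intersections. Given $a\prec a\vee b$, I would locate a coatom $x_0\in\widehat a\setminus\widehat b$ responsible for the cover and study the map $y\mapsto y\wedge x_0$ from $[a\wedge b,b]$ to $[a,a\vee b]$. The semibrace identity $(x\wedge y)\cdot z=(x\cdot y)\cdot(x\cdot z)$, together with the alignment of $\leq_L$ and $\leq_\wedge$ proved above, should translate the covering $a\prec a\vee b$ into the desired covering $a\wedge b\prec b$.

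The hard part will be genuinely invoking the semibrace identities here: purely order-theoretic reasoning on a finite closure operator does not imply lower semimodularity, so discreteness of $X$ must enter crucially through the interaction of $\cdot$ with $\wedge$. Concretely, given a hypothetical $c$ with $a\wedge b<c<b$, I would attempt to derive a contradiction by applying the cycloid equation to a suitable triple built from $x_0$ and elements of $X'$ lying above $c$, exploiting discreteness to collapse the resulting chain of coatoms. Identifying the correct triple, and verifying that $y\mapsto y\wedge x_0$ is a cover-preserving bijection between $[a\wedge b,b]$ and $[a,a\vee b]$, appears to be the crux of the argument.
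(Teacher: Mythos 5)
Your preliminary steps are sound and essentially routine: the agreement of the semilattice order with the $L$-algebra order on $C(X)$ (via semibrace axioms (3) and (4) together with idempotency of $\wedge$; note that antisymmetry also needs $(x\wedge y)\cdot x=e$, which follows from the inclusion $\leq_\wedge\,\subseteq\,\leq_L$ you prove first), the identification $X\setminus\{e\}=\{y\in C(X):y\prec e\}$ using discreteness, and the fact that a finite meet-semilattice with top element is a lattice. For the record, the paper itself gives no argument here: it only records that the statement is implicitly contained in Rump's proof of \cite[Proposition 18]{MR3373381}, so any complete self-contained argument would go beyond what the paper does.

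The genuine gap is exactly where you flag it: lower semimodularity is never proved. What you offer is a hoped-for strategy, and even its first concrete ingredient is faulty: the map $y\mapsto y\wedge x_0$ does not send $[a\wedge b,\,b]$ into $[a,\,a\vee b]$ --- already for $y=a\wedge b$ one gets $y\wedge x_0=a\wedge b$, which lies above $a$ only if $a\leq b$. For lower semimodularity one would rather compare the intervals in the other direction, e.g.\ via $[a,\,a\vee b]\to[a\wedge b,\,b]$, $y\mapsto y\wedge b$, and show this is cover-preserving; but verifying that requires real structural information about the $\wedge$-closure of a \emph{discrete} $L$-algebra (which is precisely what Rump's cited argument supplies), not merely the coatom description plus a loosely invoked cycloid equation. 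Since you yourself state that identifying the right triple and verifying the cover-preserving bijection ``appears to be the crux,'' the proposal establishes the coatom statement and the lattice property but leaves the central semimodularity claim open, so it does not yet constitute a proof of the theorem.
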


\begin{proof}
    This is implicitly contained in the proof of \cite[Proposition 18]{MR3373381}. 
\end{proof}

In particular, that means that for $e \neq x \in X$ and $y \in C(X)$ 
with $y \not\leq x$, we have that $x \wedge y \prec y$.
It turns out that splitting the search space by searching for $L$-algebras 
consistent with separating $3$-approximate semilattices fulfilling a 
restricted version of lower semimodularity is the best way of constructing all non-isomorphic discrete $L$-algebras of size eight. 
Therefore, our approach is as follows:

\begin{enumerate}
    \item Construct all non-conjugate $3$-approximate semilattices on  $\{1,\ldots,n-1\}$ that fulfill a restricted version of lower semimodularity.
    \item For each of these $3$-approximate semilattices on $\{1,\ldots,n-1\}$, search for all non-isomorphic $L$-algebras on $\{1,\ldots,n\}$ that are consistent with the approximate semilattice structure.
\end{enumerate}

For each step, we now describe the constraints:

\subsubsection{Constructing separating $3$-approximate semilattices}

We model a separating $3$-approximate semilattice 
$\mathrm{cl}\colon\binom{X^{\prime}}{3} \to \mathcal{P}(X^{\prime})$ as a map 
$\colon\{1, \ldots,n-1 \}^4 \to \{0,1\}$, where
\[
S(x_1,x_2,x_3,y) = 1 \Longleftrightarrow y \in \mathrm{cl}(\{ x_1,x_2,x_3 \}).
\]
This leads to the following set of properties:
\begin{enumerate}
    \item $S(x_1,x_2,x_3,y) = S(x_{\pi(1)},x_{\pi(2)},x_{\pi(3)},y)$ for all $\pi \in \Sym_3$.
    \item $S(x_1,x_2,x_3,x_i)=1$ for $i \in \{1,2,3 \}$.
    \item If $S(x_1,x_2,x_3,y_i)=1$ holds for all $i \in \{ 1,2,3 \}$ and $S(y_1,y_2,y_3,z) =1$, then $S(x_1,x_2,x_3,z) = 1$. 
\end{enumerate}

By Rump's theorem on the lower semimodularity of $C(X)$, we can add the following property which makes it possible to split the search in a small number of cases:

\begin{enumerate}
\item[(4)] If $S(x_1,x_2,x_3,y)=1$, $S(x_1,x_2,x_2,y)=0$ and $S(x_1,x_2,x_3,z)=1$ hold, then $S(x_1,x_2,y,z) = 1$.
\end{enumerate}

This means
\[
y \in \mathrm{cl}(\{x_1,x_2,x_3\})\setminus  \mathrm{cl}(\{x_1,x_2\})
\implies
\mathrm{cl}(\{x_1,x_2,x_3\}) = \mathrm{cl}(\{x_1,x_2,z\}).
\]
%\]that if $y \in \mathrm{cl}(\{x_1,x_2,x_3\})$ but $y \notin \mathrm{cl}(\{x_1,x_2\})$, then %$\mathrm{cl}(\{x_1,x_2,x_3\}) = \mathrm{cl}(\{x_1,x_2,z\})$.

Also, for symmetry breaking, we demand that $S$ has to be lexicographically \emph{greatest} with respect to relabelling of the variables to ensure that we get as much relations of the form $S(i,j,k,l)=1$ for \emph{small} $i,j,k,l$, as such relations put a lot of restrictions to the first 
entries of an $L$-algebra multiplication table. 

\subsubsection{Searching for $L$-algebras on a fixed $3$-approximate semilattice}

Given a function $S(x_1,x_2,x_3,y)$ for a separating $3$-approximating semilattice on $\{1,\ldots,n-1 \}$, we want to construct multiplication tables of $L$-algebras on $\{1,\ldots,n \}$ that are consistent with the approximate semilattice structure. This means that
\begin{enumerate}
    \item $M_{i,j} = n \Longleftrightarrow (i=j) \textnormal{ or } (j = n)$. This is just the discreteness condition.
    \item For $i,j,k \in \{1,\ldots,n-1 \}$, $S(i,j,j,k) = 1$ holds if and only if $(i \wedge j) \cdot k = e$ which translates to the equation $M_{M_{i,j},M_{i,k}}=n$. As we are looking at discrete $L$-algebras, this is equivalent to $(M_{i,j} =M_{i,k})$ or 
    $(i=k)$.
    \item for $i,j,k,l \in \{1,\ldots, n-1 \}$, $S(i,j,k,l) = 1$ holds if and only if 
    \[
    (i \wedge j \wedge k) \cdot l = e,
    \]
    which translates to the equation $M_{M_{M_{i,j},M_{i,k}},M_{M_{i,j},M_{i,l}}}=n$. Due to discreteness, this is equivalent to $(M_{M_{i,j},M_{i,k}} = M_{M_{i,j},M_{i,l}})$ or $(M_{i,j}=M_{i,l})$ or $(i=l)$.
    \item For $g \in \Sym_{n-1}$ such that $S(i,j,k,l) = S(g(i),g(j),g(k),g(l))$ for all $i,j,k,l \in \{1,\ldots,n-1 \}$, we have $M \leq_{\mathrm{lex}} M^g$
\end{enumerate}

\begin{proof}[Proof of Proposition \ref{pro:discrete}]
The conditions mentioned before can easily be translated to constraints. 
In total, when distinguishing by separating $3$-approximate semilattices 
of size 7 that fulfill restricted lower semimodularity, we get 73 cases that can be treated separately. 
This leads to the complete calculation of non-isomorphic discrete $L$-algebras of size eight. 

The construction of the database  
of isomorphism classes of 
discrete $L$-algebras of size eight 
took about four days, see Table \ref{tab:runtime}.
\end{proof}

%\subsection{Some run-times}
%
%
%Our method produces a database (159MB) of isomorphism classes of
%$L$-algebras of size $\leq7$ in about 6 hours. 
%
%Minor variations of our algorithm produce database of particular classes
%of $L$-algebras such as Hilbert algebras. A database (11MB) 
%of isomorphism 
%classes of Hilbert algebras
%of size $\leq9$ was constructed in about two hours. 
%Isomorphism classes of Hilbert algebras of size
%10 were only enumerated, not constructed. 
%%The enumeration of Hilbert algebras took only a~few minutes. 
%
%The construction of the database (31GB) of isomorphism classes of 
%discrete $L$-algebras of size eight 
%took about four days. 
%
%The calculations were performed in an 
%Intel(R) Core(TM) i7-8700 CPU @ 3.20GHz, with 32GB RAM. 
%
%\end{proof}

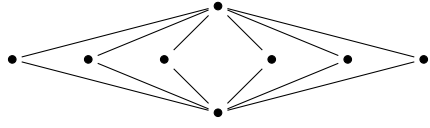
\begin{figure}
\begin{tikzpicture}
  \node [label=above:{}, label=below:{} ] (n8)  {} ;
  \node [above right of=n8,label=below:{}, label=below:{} ] (n4)  {} ;
  \node [above left of=n8,label=below:{}, label=below:{} ] (n5)  {} ;
    \node [left of=n5,label=below:{}, label=below:{} ] (n2)  {} ;
  \node [right of=n4,label=below:{}, label=below:{} ] (n3)  {} ;
  \node [left of=n2,label=below:{}, label=below:{} ] (n6)  {} ;
    \node [right of=n3, label=below:{}, label=below:{} ] (n1)  {} ;
  \node [above right of=n5, label=above:{}, label=below:{} ] (n7)  {} ;
  
   \draw [fill] (n1) circle [radius=.5mm];
   \draw [fill] (n2) circle [radius=.5mm];
   \draw [fill] (n3) circle [radius=.5mm];
   \draw [fill] (n4) circle [radius=.5mm];
   \draw [fill] (n5) circle [radius=.5mm];
   \draw [fill] (n6) circle [radius=.5mm];
   \draw [fill] (n7) circle [radius=.5mm];
   \draw [fill] (n8) circle [radius=.5mm];
   \draw (n2)--(n8);
   \draw (n1)--(n8);
   \draw (n3)--(n8);
   \draw (n4)--(n8);
   \draw (n5)--(n8);
   \draw (n6)--(n8);
   \draw (n2)--(n7);
   \draw (n1)--(n7);
   \draw (n3)--(n7);
   \draw (n4)--(n7);
   \draw (n5)--(n7);
   \draw (n6)--(n7);
\end{tikzpicture}
\caption{The diamond poset of eight elements.}
\label{fig:diamond}
\end{figure}

\subsection{Non-discrete $L$-algebras}

\begin{pro}
\label{pro:non_discrete}
    Up to isomorphism, there are 89,712,587 non-discrete $L$-algebras
    over a poset distinct from the diamond of Figure \ref{fig:diamond}.
\end{pro}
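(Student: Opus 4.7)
The plan is to apply directly the poset-splitting approach described in the proof of Theorem \ref{thm:eight}, now restricted to posets on seven elements that are neither the antichain nor the diamond of Figure \ref{fig:diamond}. I would proceed in three steps.

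First, I would enumerate all non-isomorphic posets on the set $\{1,\ldots,7\}$, for instance with \GAP\ or by a dedicated constraint model, and discard the antichain (already covered by Proposition \ref{pro:discrete}) as well as the diamond (to be treated separately in the case listed in Table \ref{tab:runtime}). This leaves 2043 posets, matching the entry in Table \ref{tab:runtime}.

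Second, for each remaining poset, encoded as a matrix $P = (P_{i,j}) \in \{0,1\}^{7 \times 7}$, I would build a constraint satisfaction model whose variables are the entries of an $8 \times 8$ matrix $M$ with values in $\{1,\ldots,8\}$. The constraints are Conditions (1)--(5) of Section \ref{preliminaries} together with
\[
M_{i,j} = 8 \Longleftrightarrow P_{i,j} = 1
\]
for $i,j \in \{1,\ldots,7\}$, which forces the natural order of the resulting $L$-algebra to coincide with $P$. Symmetry breaking is enforced by requiring $M \leq_{\mathrm{lex}} M^g$ for every $g \in \Aut(P) \leq \Sym_7$, with the lexicographic order of matrix entries given by the ordering illustrated in Figure \ref{fig:matrix_ordering}. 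I would pass each such model to Savile Row \cite{nightingale2017automatically}, collect all solutions, and sum their counts across the 2043 posets to obtain the claimed total.

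The main obstacle I anticipate is managing the trade-off between symmetry breaking and search-tree size: restricting to $\Aut(P)$ rather than $\Sym_7$ may leave some equivalent solutions when the poset has a small automorphism group, while posets with large automorphism groups incur costly lexicographic checks. Unlike the discrete and diamond cases, however, here neither the constraint that most off-diagonal entries equal $8$ (as in the discrete case) nor a large stabilizer (as for the diamond) dominates: the partial order already fixes a substantial fraction of the $M_{i,j}$, and most non-diamond posets have modest automorphism group, so no single case should dominate the computation. The runtime reported in Table \ref{tab:runtime} (11 hours) is consistent with this expectation and suffices to establish the stated count of 89712587.
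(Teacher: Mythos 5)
Your proposal follows essentially the same route as the paper: enumerate posets on seven elements up to isomorphism, exclude the antichain (discrete case) and the diamond, and for each remaining poset run a per-poset constraint model combining the $L$-algebra axioms with the constraint $M_{i,j}=8 \Leftrightarrow P_{i,j}=1$ and lex-leader symmetry breaking over $\Aut(P)$, then sum the solution counts. The only differences are cosmetic or performance-related: the paper additionally imposes the implied constraints $P_{j,k}=1 \Rightarrow M_{M_{i,j},M_{i,k}}=n$ and $P_{i,j}=1 \Rightarrow M_{i,k}=M_{M_{j,i},M_{j,k}}$ to strengthen propagation, and your worry that restricting symmetry breaking to $\Aut(P)$ could leave duplicate solutions is unfounded, since the natural order is an invariant and any isomorphism between $L$-algebras on the same labelled poset is a poset automorphism.
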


\begin{proof}
In the case that the natural order on $X^{\prime} = X \setminus \{ e \}$ is non-discrete then our approach can be seen as working with separating $1$-approximate semilattices, that is, posets:
\begin{enumerate}
    \item Construct all non-isomorphic posets on the set $\{1,\ldots,n-1\}$.
    \item For each isomorphism type of poset on $\{1,\ldots,n-1\}$, add another element $n$ and search for all non-isomorphic  $L$-algebras on
    the set $\{1,\ldots,n-1\}$ whose logical unit is $n$ and whose natural order restricts to the given partial order on $\{1,\ldots,n-1\}$.
\end{enumerate}

A few words on the construction of the posets: we use an easy python script to first construct all posets of size $n-1$ by starting with a poset of size $1$ and repeatedly adding new minimal elements by declaring an upwards-closed set from the recent poset as the set of elements lying above the new element. The size of the new constructed upset should be at least as big as the upsets that have already been constructed. We then filter out isomorphic posets by demanding that the constructed poset should be the least with respect to some lexicographic condition. We then encode the poset structure to a zero-one-matrix in a way that will be explained below.

Let $X$ be an $L$-algebra and $x,y,z \in X$. If $x\leq y$ with respect to the natural order in $X$, then
\[
x \cdot z = e \cdot (x \cdot z) = (x \cdot y) \cdot (x \cdot z) = (y \cdot x) \cdot (y \cdot z).
\]
Also recall, that for any elements $x,y,z \in X$, the implication $y \leq z \Rightarrow x \cdot y \leq x \cdot z$ holds.

Therefore, given a partial order relation on $\{1, \ldots,n-1 \}$, encoded as a matrix 
$P \in \{ 0,1 \}^{(n-1) \times (n-1)}$ with
\[
P_{i,j} = \begin{cases}
1 & i \leq j, \\
0 & \textrm{otherwise},
\end{cases}
\]
we use the following constraints, additionally to the $L$-algebra axioms:
\begin{enumerate}
    \item $M_{i,j} = n \Longleftrightarrow P_{i,j} = 1$ for $i,j \in \{1, \ldots,n-1 \}$,
    \item $P_{j,k} = 1 \implies M_{M_{i,j},M_{i,k}} = n$, for $i,j,k \leq n-1$
    \item $P_{i,j} = 1 \implies M_{i,k} = M_{M_{j,i},M_{j,k}}$ for $i,j,k \leq n-1$,
    \item if $g \in \Sym_{n-1}$ is such that $P_{i,j} = P_{g(i),g(j)}$ holds for all $i,j \leq n-1$ % maybe also define some kind of P^g here?
    , then $M \leq_{\textrm{lex}} M^g$ % but wrt to the other enumeration!!!
\end{enumerate}

These conditions can directly translated to constraints that enable us to enumerate \emph{almost} all non-isomorphic $L$-algebras whose natural order is non-discrete, except for the case of a \emph{diamond order}, whose restriction to the set $\{1, \ldots, n-1 \}$ is given by $i \leq_{\textrm{diam}} j \Longleftrightarrow i = j \textnormal{ or } i = n-1$.
\end{proof}

\begin{pro}
\label{pro:diamond}
    Up to isomorphism, there
    are 138,219,543 $L$-algebras of size eight 
    on the diamond poset of Figure \ref{fig:diamond}. 
\end{pro}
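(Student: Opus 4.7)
The diamond poset has automorphism group $\Sym_6$ acting on its six pairwise-incomparable middle atoms, which overwhelms the symmetry-breaking used in the proof of Proposition \ref{pro:non_discrete}. The plan is to adapt the $k$-approximate semilattice strategy of the discrete case: instead of splitting the search space by the trivial $1$-approximate semilattice (which is just the poset) we split by a finer invariant extracted from the $\wedge$-closure $C(X)$ supplied by Theorem \ref{thm:Rump2}.

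Writing $a_1, \ldots, a_6$ for the middle atoms and $X' = X \setminus \{e\}$, note that the meets $a_i \wedge a_j$ computed in $C(X)$ need not coincide with the bottom element of $X$. Setting
\[
\mathrm{cl}(\{a_i, a_j\}) = \{\, z \in X' : z \geq a_i \wedge a_j \text{ in } C(X) \,\}
\]
for pairs $i \neq j$, together with $\mathrm{cl}(\{z\}) = \{y \in X' : y \geq z\}$ for singletons, defines a separating $2$-approximate semilattice on $X'$ whose induced partial order is the diamond. Since $C(X)$ is an invariant of $X$ up to isomorphism, so is this semilattice up to $\Sym_6$-conjugation, and the enumeration splits cleanly across its conjugacy classes.

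The steps are then: (i) enumerate up to $\Sym_6$-conjugation all separating $2$-approximate semilattices on $X'$ whose singleton closures match the diamond; (ii) for each one, encode the $L$-algebra axioms together with the consistency conditions of Section 3.2 (obtained by rewriting every $(a_i \wedge a_j)\cdot z$ purely in terms of $\cdot$) as a constraint problem, with symmetry-breaking done only over the stabiliser of the semilattice inside $\Sym_6$; (iii) sum the resulting solution counts. According to Table \ref{tab:runtime}, step (i) produces $156$ cases and the final total is $138\,219\,543$.

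The hard part is step (i): the splitting must be fine enough to kill the $\Sym_6$-symmetry on each individual case, yet coarse enough that neither the number of cases nor the per-case search spaces explode. Going from $1$-approximate to $2$-approximate semilattices should suffice, because the meet information on pairs of middle atoms already separates the large $\Sym_6$-orbits that would otherwise dominate the enumeration; going up to $k = 3$ as in the discrete case would unnecessarily multiply the number of cases, since the singleton closures here are already pinned down by the diamond poset.
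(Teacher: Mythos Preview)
Your overall strategy---split the diamond case by an invariant extracted from the $\wedge$-closure $C(X)$, then do symmetry-breaking only over the stabiliser of that invariant inside $\Sym_6$---matches the paper. But the specific invariant you describe is not the one the paper uses, and the number $156$ does not belong to your construction.

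You propose recording, for each pair of middle atoms $a_i,a_j$, the \emph{full} closure $\mathrm{cl}(\{a_i,a_j\})=\{z\in X':z\geq a_i\wedge a_j\}$, i.e.\ a genuine separating $2$-approximate semilattice on $X'$. The paper explicitly rejects this: it says that ``to our knowledge, there is no theorem that describes the semilattice structure of $C(X)$ well enough to restrict the search to a small quantity of $2$-approximate semilattices,'' and instead uses what it calls ``a really coarse way of using $2$-approximate semilattices.'' Concretely, the paper only records a symmetric $\{0,1\}$-matrix $R\in\{0,1\}^{6\times 6}$ with zero diagonal, where $R_{i,j}=1$ exactly when $a_i\wedge a_j\leq a_k$ holds for \emph{all} middle atoms $a_k$. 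Up to simultaneous relabelling of rows and columns this is just an unlabelled simple graph on $6$ vertices, and there are precisely $156$ of those---which is where the number in Table~\ref{tab:runtime} comes from. The consistency constraint then reads: $R_{i,j}=1$ if and only if $M_{M_{i,j},M_{i,k}}=n$ for every $k\in\{1,\dots,6\}$.

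Your finer invariant would in principle work too, but it would produce far more than $156$ conjugacy classes (for each pair you are recording an arbitrary subset of the other atoms, subject only to the approximate-semilattice axioms), so the per-case gain in symmetry reduction would be offset by an explosion in the number of cases. The paper's point is that the single bit ``is $a_i\wedge a_j$ below everything?'' already cuts the $\Sym_6$-symmetry down enough to make the search tractable.
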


\begin{proof}
    This case turns out to be the hardest amongst all non-discrete posets. 
    We split up the case as follows. Let $i,j \in \{1, \ldots,n-2 \}$. If, in the $\wedge$-closure $C(X)$, the condition $i \wedge j \leq k$ holds for some $k \in \{1, \ldots, n-2 \}$, then this can be expressed as an equation
\[
(i \wedge j) \cdot k = e \Longleftrightarrow (i \cdot j) \cdot (i \cdot k) = e.
\]
     
     We use an approach which can be seen as a really coarse way of using $2$-approximate semilattices, for as to our knowledge, there is no theorem that describes the semilattice structure of $C(X)$ well enough to restrict the search to a small quantity of $2$-approximate semilattices. So we first construct matrices $R \in \{ 0,1\}^{(n-2) \times (n-2)}$ such that $R_{i,i} = 0$ and $R_{i,j} = R_{j,i}$ for all $i,j \in \{1, \ldots,n-2 \}$. These matrices parametrize all couples of $i,j \in \{ 1, \ldots,n-2 \}$ with the property that $i \wedge j \leq k$ holds for all $k \in \{1, \ldots,n-2 \}$ in the $\wedge$-closure of the final $L$-algebra. If we look at two such matrices as being equivalent if they only differ by a simultaneous relabelling of rows and columns, then we get 156 cases that can be treated as follows:

    Given a symmetric zero-one-matrix $R$ with only zeros on the diagonal, we have the following conditions
    \begin{enumerate}
      \item [(4')] the lexicographical symmetry breaking is replaced as follows: if $g \in \Sym_{n-2}$ is such that $R_{i,j} = R_{g(i),g(j)}$ holds for all $i,j \leq n-1$ % maybe also define some kind of R^g here?
    , then $M \leq_{\textrm{lex}} M^g$ % wrt to the other enumeration!
    \item [(5)] for all $i,j \in \{1,\ldots,n-2 \}$, we have $R_{i,j}=1$ if and only if $M_{M_{i,j},M_{i,k}}=n$ holds for all $k \in \{1, \ldots ,n-2 \}$.
\end{enumerate}

The required 
156 cases were treated in about four hours, see Table \ref{tab:runtime} for more
information. 
\end{proof}

\section{Linear $L$-algebras}
\label{linear}

%Each $L$-algebra $X$ has a canonical partial order given by 
%\begin{equation}
%    \label{eq:order}
%    x \leq y \Longleftrightarrow x \cdot y = e,
%\end{equation}
%see \cite{MR2437503}. 
Recall that under 
the partial order \eqref{eq:poset} 
left-multiplications are monotone, i.e. if $y\leq z$, then $x \cdot y \leq x \cdot z$ for all $x\in X$.
%as well (this does not hold with $\leq$ being replaced by $<$.

\begin{defn}
\label{defn:linear}
An $L$-algebra $X$ is said to be \emph{linear} if 
$x \leq y$ or $y \leq x$ for all $x,y \in X$.
\end{defn}

In the case of linear $L$-algebras   
\eqref{eq:poset} is indeed a total order. 

% Two extreme situations related to the order on an $L$-algebra are:
% \begin{enumerate}
%     \item \emph{Discrete} $L$-algebras, which are defined by the condition $x \leq y \Longleftrightarrow ((x=y) \vee (y=e))$. In this case, the induced order on $X \setminus \{ e \}$ is trivial.
%     \item \emph{Linear} $L$-algebras, where $(x \leq y) \vee (y \leq x)$ is valid for all $x,y \in X$. In this case, the order on $X$ is total.
% \end{enumerate}

%In the first case there is ``as few order as possible'', whereas in the second case there is ''as much order as possible``.

%\begin{table}
%\caption{The numbers $\lambda(n)$ of non-isomorphic linear $L$-algebras of size $n$.}
%\begin{tabular}{|r|ccccc|}
%\hline
%$n$ & 3 & 4 & 5 & 6 & 7 \tabularnewline
%\hline 
%$\lambda(n)$ & 2 & 5 & 15 & 52 & 203 \tabularnewline
%\hline
%\end{tabular}
%\label{tab:linear_l_algebras}
%\end{table}

For $n\geq2$ let $\lambda(n)$ be the number of non-isomorphic linear $L$-algebras with $n$ elements. A look into
the sequence A000110 of \cite{oeis} suggests that $\lambda(n)=B(n-1)$, where $B(m)$ is the $m$-th \emph{Bell number} that counts the number of ways to partition a set with $m$ elements into non-empty subsets.

%One can show that this is indeed true for all values of $n$ and points to the structure of finite linear $L$-algebras. The aim of this section therefore is to explain the inductive construction of this class of $L$-algebras.

\begin{defn}
We call an element $x$ in an $L$-algebra $X$ \emph{invariant} if $y \cdot x = x$ holds for all $y > x$. 
\end{defn}

%If $P$ is a partially ordered set, we call an $L$-algebra $X$ with the same underlying set as $P$ an \emph{$L$-algebra on $P$}, when we have the equivalence
%\[
%x \leq y \Longleftrightarrow x \cdot y = e
%\]
%for $x,y \in P$. 
%In this case, 
%$P$ has a greatest element that coincides with the logical unit of $X$.
 
% We start with a few lemmas:
 
\begin{lem} 
\label{lem:linear}
Let $n \geq 1$ and let $X$ be an $L$-algebra on the ordered set 
\[
L_n = \{ x_0 > x_1 > \ldots > x_{n-1} \}.
\]
Then the following statements hold. 
\begin{enumerate}
     \item If $x \geq y > z$, then $x \cdot y > x \cdot z$ for all $x,y,z \in X$.
     \item $x \cdot y \geq y$ holds for all $x,y \in X$.
     \item If $x \leq y$, then $x \cdot z \geq y \cdot z$ for all $x,y,z \in X$.
     \item If $p\ne e$ is the smallest invariant element of $X$, then $p \cdot x > x$ for all $x \leq p$.
 \end{enumerate}
 \end{lem}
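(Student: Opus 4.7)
The common tool for all four statements will be the cycloid identity $(x\cdot y)\cdot(x\cdot z)=(y\cdot x)\cdot(y\cdot z)$ combined with linearity, which forces exactly one of $u\cdot v$ or $v\cdot u$ to equal $e$ whenever $u\neq v$. I would treat the four parts in order, each being used in the next.

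For (1), the assumption $x\geq y$ gives $y\cdot x=e$, so the cycloid identity collapses to $(x\cdot y)\cdot(x\cdot z)=y\cdot z$. Since $y>z$ strictly, $y\cdot z\neq e$, so $x\cdot y\not\leq x\cdot z$; combined with the general monotonicity $y\leq z\Rightarrow x\cdot y\leq x\cdot z$ recorded in Section~\ref{preliminaries}, this yields the strict inequality. Statement (2) I would prove by induction on $y$ along $L_n$, starting at the minimum $x_{n-1}$ where there is nothing to prove. In the inductive step the case $x\leq y$ is trivial; otherwise $y<x$ so $y\cdot x=e$, and setting $u=x\cdot y$ the cycloid identity with $z=u$ yields $(x\cdot y)\cdot(x\cdot u)=y\cdot u$. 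If $u<y$, the induction hypothesis applied to $u$ gives $u\cdot(x\cdot u)=e$, so the left-hand side collapses to $e$, forcing $y\leq u<y$, a contradiction. Statement (3) is then immediate: $x\leq y$ gives $x\cdot y=e$, and the cycloid identity yields $x\cdot z=(y\cdot x)\cdot(y\cdot z)\geq y\cdot z$ by (2).

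The main difficulty is (4). From the invariance of $p$ I would first extract the auxiliary identity $p\cdot z=p\cdot(y\cdot z)$ valid for every $y>p$ and every $z\in X$, obtained by substituting $y\cdot p=p$ and $p\cdot y=e$ into the cycloid identity. The case $x=p$ is clear since $p\cdot p=e>p$. For $x<p$, the plan is to argue by contradiction: assume $p\cdot x=x$ for some $x<p$, choose $x$ \emph{maximal} with this property, and deduce that $x$ is itself invariant, contradicting the minimality of $p$. Verifying $y\cdot x=x$ for each $y>x$ then splits into three cases. When $y>p$, the auxiliary identity yields $p\cdot(y\cdot x)=x$; since $y\cdot x\geq x$ by (2), the strict possibilities $x<y\cdot x\leq p$ and $y\cdot x>p$ are excluded respectively by (1) (which would force $p\cdot(y\cdot x)>x$) and by $p\cdot w=e$ whenever $w>p$ (which would force $p\cdot(y\cdot x)=e\neq x$), so $y\cdot x=x$. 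The case $y=p$ is the standing assumption. For $x<y<p$, the cycloid identity gives $y\cdot x=(p\cdot y)\cdot x$, and maximality of $x$ forces $p\cdot y>y$; the subcases $p\cdot y>p$, $p\cdot y=p$, and $y<p\cdot y<p$ then reduce respectively to the first case applied to $p\cdot y$, to direct substitution, and to a downward induction on $y$ inside $(x,p)$. The hard part will be organizing this last subcase so that maximality of $x$ prevents the recursion from looping and the downward induction on the elements of $(x,p)$ is well-founded.
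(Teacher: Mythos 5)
Your argument is correct in all four parts; (1) and (3) coincide with the paper's proof, but (2) and (4) take genuinely different routes. For (2) the paper avoids induction altogether: by the strict monotonicity of (1), the map $z \mapsto x \cdot z$ embeds the interval $[x_{n-1},y]$ into $[x_{n-1},x \cdot y]$, and comparing cardinalities of these finite intervals in a chain forces $x \cdot y \geq y$. Your upward induction on $y$, using the cycloid instance $(x \cdot y)\cdot(x \cdot u) = y \cdot u$ with $u = x \cdot y$, is a perfectly good substitute and is arguably more self-contained, at the price of carrying an induction hypothesis quantified over all $x$. For (4) both proofs start from a maximal $x < p$ with $p \cdot x = x$, but the paper then makes a \emph{second} extremal choice --- the greatest $y > x$ with $y \cdot x > x$ --- and finishes in three lines: $y < p$ by (3), $p \cdot y > y$ by maximality of $x$, hence $(p \cdot y) \cdot x = x$ by maximality of $y$, and the cycloid equation gives $x = (p \cdot y)\cdot(p \cdot x) = (y \cdot p)\cdot(y \cdot x) = y \cdot x > x$. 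Your route instead shows that $x$ would be invariant, contradicting the minimality of $p$; this requires your three-case analysis and the nested downward induction, but the step you flag as delicate is in fact unproblematic: for $y$ strictly between $x$ and $p$, maximality of $x$ gives $p \cdot y > y$, so $p \cdot y$ lies strictly above $y$ in a finite chain and is either $\geq p$ (your first two subcases) or already covered by the downward induction hypothesis, so the recursion cannot loop. In short, both approaches are sound; the paper's buys brevity via the counting trick in (2) and the double-extremal trick in (4), while yours stays closer to direct equational manipulation.
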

 
 \begin{proof} (1) Let $x \geq y > z$, then
     \[
(x \cdot y) \cdot (x \cdot z) = (y \cdot x) \cdot (y \cdot z) = e \cdot (y \cdot z) = y \cdot z \neq e.
\]
This implies that $x \cdot y \not\leq x \cdot z$. As the order on $X$ is total, this implies $x \cdot y > x \cdot z$.

(2) Without loss of generality, we may assume that $x>y$. By (1), for each 
$x\in X$ the map $\varphi_x\colon [x_{n-1},y] \to [x_{n-1},x \cdot y]$, $z \mapsto x \cdot z$,
is a well-defined monotone embedding. Therefore 
\[
|[x_{n-1},y]|=|\varphi_x([x_{n-1},y])|\leq |[x_{n-1},x \cdot y]|.
\]
Since these intervals are finite, it follows that 
$x \cdot y \geq y$.

(3) Let $x,y,z \in X$ be such that $x \leq y$. By (2), 
\[
x \cdot z = e\cdot (x\cdot z)=
(x \cdot y) \cdot (x \cdot z) = (y \cdot x) \cdot (y \cdot z) \geq y \cdot z.
\]

(4) Assume that there exists a maximal $x$ with $x < p$ and $p \cdot x = x$. As $x$ is not invariant, there is a greatest element $y$ such that 
$y>x$ and $y\cdot x > x$. Then $y<p$, as $y\geq p$ and (3) imply
\[
x=p\cdot x\geq y\cdot x>x, 
\]
a contradiction. The maximality of $x$ implies that $p\cdot y>y$. 
Since $y$ is maximal and $p\cdot y>y$, $(p\cdot y)\cdot x=x$. By using
the cycloid equation,
\[
x = (p \cdot y) \cdot (p \cdot x) = (y \cdot p) \cdot (y \cdot x) = y \cdot x > x,
\]
a contradiction. 
Therefore $p \cdot x > x$.
\end{proof}

We can now prove that each linear $L$-algebra can be built up inductively by adding a new smallest element $x^{\prime}$ and declaring an admissible value for $p \cdot x^{\prime}$, where $p$ is the smallest invariant element in the old $L$-algebra.

\begin{pro}
\label{pro:linear}
Let $p$ be the smallest invariant element in $X$. Let $c$ be an element in the superset $L_{n+1} \supseteq L_n$ such that $p \cdot x_{n-1} > c$. Then there is a unique $L$-algebra $X^{\prime}$ on $L_{n+1}$ such that the canonical embedding $L_n \hookrightarrow L_{n+1}$ embeds $X$ as a sub $L$-algebra of $X^{\prime}$ and such that $p \cdot x_n = c$.
\end{pro}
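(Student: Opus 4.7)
My plan is to prove uniqueness and existence separately. Uniqueness will come from using the cycloid equation to force the values $y \cdot x_n$ for each $y \in L_n$; existence will then follow by defining the operation on $L_{n+1}$ via these forced formulas and checking that the resulting structure satisfies the $L$-algebra axioms.

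\textbf{Uniqueness.} Linearity and the $L$-algebra axioms already force $x_n \cdot y = e$ for all $y \in L_n$ (as $x_n$ is the new minimum) and $x_n \cdot x_n = e$, so only the values $\phi(y) := y \cdot x_n$ for $y \in L_n$ remain to be pinned down. I would analyse these in three cases according to the position of $y$ relative to $p$. For $y = p$, the value $\phi(p) = c$ is given by hypothesis. For $y > p$, the cycloid at $(y, p, x_n)$, combined with $p \cdot y = e$ and the invariance $y \cdot p = p$, collapses to $p \cdot \phi(y) = c$. Since $c \neq e$ (as $c < p \cdot x_{n-1} \leq e$), this forces $\phi(y) < p$; and if $\phi(y)$ were in $X$ then Lemma \ref{lem:linear}(1) would give $p \cdot \phi(y) \geq p \cdot x_{n-1} > c$, a contradiction. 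Hence $\phi(y) = x_n$. For $y < p$, the cycloid at $(y, p, x_n)$ together with $y \cdot p = e$ yields $\phi(y) = (p \cdot y) \cdot c$, which already lies in $X$ whenever $c \in X$. In the corner case $c = x_n$, the formula recurses, but iterating and using that $p \cdot y > y$ from Lemma \ref{lem:linear}(4) eventually takes the argument above $p$, reducing to the previous case and giving $\phi(y) = x_n$.

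\textbf{Existence and main obstacle.} For existence I would define $X^{\prime}$ on $L_{n+1}$ via the formulas for $\phi(y)$ extracted above and verify the axioms of Definition \ref{defn:L}. Axioms (1) and (3) reduce to checking that the induced order on $X^{\prime}$ coincides with the prescribed total order on $L_{n+1}$, which follows from a short inspection of each of the cases for $\phi(y)$ together with $c < e$. The real work, and what I expect to be the main obstacle, is verifying the cycloid equation $(a \cdot b) \cdot (a \cdot c') = (b \cdot a) \cdot (b \cdot c')$ on all triples in $L_{n+1}^3$. If $x_n \notin \{a, b, c'\}$, the identity is inherited from the $L$-algebra $X$; otherwise one performs a case analysis on which of $a, b, c'$ equal $x_n$ and on the positions of the remaining arguments relative to $p$. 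In each subcase the defining formulas reduce the cycloid identity to a consequence of the cycloid equation in $X$ applied to a suitable auxiliary triple (for example $(p, a, b)$, which rewrites $p \cdot (a \cdot b)$ as $p \cdot b$ when $a > p > b$), combined with Lemma \ref{lem:linear} and the invariance of $p$. The bound $c < p \cdot x_{n-1}$ plays a decisive role throughout: it prevents the value $\phi(p) = c$ from colliding with any $p \cdot z$ for $z \in X$ with $z \leq p$, which is exactly what guarantees internal consistency between the three prescriptions used to define $\phi$.
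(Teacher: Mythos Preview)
Your proposal is correct and follows essentially the same route as the paper: both derive the forced formula $y\cdot x_n=(p\cdot y)\cdot c$ for $y\leq p$ and $y\cdot x_n=x_n$ for $y>p$ from the cycloid equation at $(y,p,x_n)$ together with the invariance of $p$, and then verify the cycloid identity on $L_{n+1}$ by a case analysis on which argument equals $x_n$ and on the position of the others relative to $p$. The only organizational difference is that the paper splits the argument according to whether $c=x_n$ or $c>x_n$ (and, for $y>p$, uses that left multiplication by $y$ is an order automorphism of $[x_n,p]$ rather than your contradiction via $p\cdot\phi(y)=c$), whereas you separate uniqueness from existence first; the substance is the same.
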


As $p \cdot x_n$ must necessarily be smaller than $p \cdot x_{n-1}$ (part (1) of Lemma \ref{lem:linear}), 
the proposition describes the construction of \emph{each} 
extension to an $L$-algebra on $L_{n+1}$.

\begin{proof}
We look at two cases, namely $c = x_n$ or $c > x_n$. Note that we must define $a \cdot b$ only for $a \in L_n$ and $b = x_n$, as $x_n \cdot a = e$ is forced by the order of $P_{n+1}$.

Assume first that $c = x_n$. 
By Lemma \ref{lem:linear}, setting $a \cdot x_n := x_n$ for $a\ne x_n$ defines the only possible extension to an $L$-algebra structure on $L_{n+1}$ such that $p \cdot x_n = x_n$. We now check that this definition is indeed compatible with the cycloid equation, meaning that
\[
(x\cdot y)\cdot (x\cdot z)=
(y\cdot x)\cdot (y\cdot z)
\]
holds for all $x,y,z \in L_{n+1}$. Without loss of generality 
may assume that $x,y,z$ are different. 
If $z=x_n$, the result is trivially true. Now we may assume
that $x>y$. If $y=x_n$, then 
\[
(x \cdot x_n) \cdot (x \cdot z) = x_n \cdot (x \cdot z) = e
=e\cdot e=
(x_n \cdot x) \cdot (x_n \cdot z). 
\]

% We check the cycloid equation for $z = x_n$. By symmetry, we may assume that $x > y$. We calculate both sides of the cycloid equation:
% \[
% (x \cdot y) \cdot (x \cdot x_n) = (x \cdot y) \cdot x_n = x_n,
% \]
% \[
% (y \cdot x) \cdot (y \cdot x_n) = e \cdot x_n = x_n.
% \]
% Now we check the case $y = x_n$ ($x = x_n$ again follows by symmetry) and assume $x,z \neq x_n$, which implies $x,y > x_n$.
% \[
% (x \cdot x_n) \cdot (x \cdot z) = x_n \cdot (x \cdot z) = e,
% \]
% \[
% (x_n \cdot x) \cdot (x_n \cdot z) = e \cdot e = e.
% \]
This proves that this indeed defines an extension to an $L$-algebra on $L_{n+1}$. It remains to show that this is the only possibility. Therefore, let $X^{\prime}$ be an extension to an $L$-algebra on $L_{n+1}$ with $p \cdot x_n = x_n$, where $p$ is the smallest invariant element of $X$.
    
By Lemma \ref{lem:linear}(1), for $a > p$, the 
map $[x_n,p] \to [x_n,a \cdot p] = [x_n,p]$, $x\mapsto a\cdot x$, is a monotone embedding, i.e. an isomorphism of ordered sets. It follows that $a \cdot x_n = x_n$.
We now know that $b \cdot x_n = x_n$ for all $b \geq p$. Assume now 
that $p \geq a > x_n$ and that $b \cdot x_n=x_n$ holds for all $b > a$. Since 
$a\cdot p=e$ and $a\cdot x_n=e$, 
\begin{gather*}
   a\cdot x_n=e\cdot (a\cdot x_n)=(a\cdot p)\cdot (a\cdot x_n)
    \shortintertext{and hence}
    a \cdot x_n = (a \cdot p) \cdot (a \cdot x_n) = (p \cdot a) \cdot (p \cdot x_n) = (p \cdot a)\cdot x_n = x_n,
\end{gather*}
as $p\cdot a>a$ by Lemma \ref{lem:linear}(4). 

We now assume that $c > x_n$. 
Again we need to define $a \cdot x_n$ when $a > x_n$. We claim that the only possible definition is given by
\[
a \cdot x_n = \begin{cases}
(p \cdot a) \cdot c & a \leq p, \\
x_n & a > p.
\end{cases}
\]
The necessity is easily shown: if $p$, $x$ and $y$ are elements in a linear $L$-algebra such that $p \geq x$ and $p\geq y$, the cycloid equation implies that
\[
x \cdot y = e \cdot (x \cdot y) = (x \cdot p) \cdot (x \cdot y) = (p \cdot x) \cdot (p \cdot y).
\]
So the first equation follows from demanding $p \cdot x_n = c$. On the other hand, as $p$ is invariant, for all $x > p$ the map given by left-multiplication by $x$ is an automorphism of the ordered set $[x_n,p]$. In particular, $x \cdot y = y$ for all $y \in [x_n,p]$.

Now, we check that these assignments do not violate the cyclic condition. Let $z = x_n$ and $x,y > p$, then the left side of the equation is
\[
(x \cdot y) \cdot (x \cdot x_n) = (x \cdot y)\cdot x_n = x_n,
\]
as $x \cdot y\geq y > p$ by Lemma \ref{lem:linear}(2). Similarly, $(y \cdot x) \cdot (y \cdot x_n) = x_n$.

Now let $z = x_n$ and $x > p \geq y$. Then, as before, 
\begin{align*}
&(x \cdot y) \cdot (x \cdot x_n) = y \cdot x_n,\\
&(y \cdot x) \cdot (y \cdot x_n) = e \cdot (y \cdot x_n) = y \cdot x_n.
\end{align*}
For $z = x_n$ and $x,y \leq p$, we use $x \cdot y = (p \cdot x) \cdot (p \cdot y)$ to calculate
\begin{align*}
(x \cdot y) \cdot (x \cdot x_n) & = \left( (p \cdot x) \cdot (p \cdot y) \right) \left( (p \cdot x) \cdot c \right) \\
& = \left( (p \cdot y) \cdot (p \cdot x) \right) \left( (p \cdot y) \cdot c \right) \\
& = (y \cdot x) \cdot (y \cdot x_n).
\end{align*}
Finally, let $x = x_n$ (the case where $y = x_n$ follows by symmetry). 
Then 
\[
(x_n \cdot y) \cdot (x_n \cdot z) = e \cdot e = e.
\]
If $y > p$, then $y \cdot x_n = x_n$ and hence 
$(y \cdot x_n) \cdot (y \cdot z)  = e$. On the other hand, if $y \leq p$, then
\[
y \cdot z \geq y \cdot x_{n-1} = (p \cdot y) \cdot (p \cdot x_{n-1}) \geq (p \cdot y) \cdot c = y \cdot x_n
\]
which finally implies that $(y \cdot x_n) \cdot (y \cdot z)  = e$.
\end{proof}

\begin{thm}
\label{thm:linear}
For all positive integer $n$, $\lambda(n)=B(n-1)$.
\end{thm}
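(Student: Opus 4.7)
The strategy is to refine the count of linear $L$-algebras by a numerical invariant that transforms predictably under the extension procedure of Proposition \ref{pro:linear}, and then to match the resulting recursion to the Bell (Peirce/Aitken) triangle. For a linear $L$-algebra $X$ on $L_n$ with smallest invariant element $p_X$, define $\phi(X) \in \{0, 1, \ldots, n-2\}$ to be the unique index with $p_X \cdot x_{n-1} = x_{\phi(X)}$; this is clearly an isomorphism invariant. Let $T(n, k)$ denote the number of isomorphism classes of linear $L$-algebras on $L_n$ with $\phi(X) = k$, so that $\lambda(n) = \sum_k T(n, k)$. The crucial observation, extracted from the explicit formulas in the proof of Proposition \ref{pro:linear}, is that a parent $X$ with $\phi(X) = k$ has exactly $n - k$ children, whose $\phi$-values form precisely the set $\{0\} \cup \{k+1, k+2, \ldots, n-1\}$, each attained once. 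When the extension parameter is $c = x_n$, every $a \neq x_n$ satisfies $a \cdot x_n = x_n$, so $x_n$ becomes the new smallest invariant and $\phi(X') = 0$; when $c = x_j$ with $k+1 \leq j \leq n-1$, the identity $p_X \cdot x_n = c = x_j \neq x_n$ prevents $x_n$ from being invariant, no old element changes its invariance status, and $\phi(X') = j$.

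This yields the recursion $T(n+1, 0) = \lambda(n)$ and $T(n+1, j) = \sum_{i=0}^{j-1} T(n, i)$ for $1 \leq j \leq n-1$, equivalently $T(n+1, j) = T(n+1, j-1) + T(n, j-1)$ for $j \geq 2$ with $T(n+1, 1) = T(n, 0)$. A straightforward induction, using this subtraction identity and the Bell recurrence $A(m, j) = A(m, j-1) + A(m-1, j-1)$, $A(m, 1) = A(m-1, m-1)$, establishes $T(n, j) = A(n-2, j)$ for $n \geq 3$ and $1 \leq j \leq n-2$, where $A$ denotes the Bell triangle with $A(m, m) = B(m)$.

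The conclusion then follows from the classical telescoping identity $\sum_{j=1}^m A(m, j) = B(m+1) - B(m)$, obtained by summing $A(m, j) - A(m, j-1) = A(m-1, j-1)$ over $j$ and using $A(m, 1) = B(m-1)$, $A(m, m) = B(m)$. Combined with $T(n, 0) = \lambda(n-1) = B(n-2)$ (the inductive hypothesis), this gives
\[
\lambda(n) = T(n, 0) + \sum_{j=1}^{n-2} T(n, j) = B(n-2) + \bigl(B(n-1) - B(n-2)\bigr) = B(n-1).
\]
The main obstacle is the case analysis at the start: one must carefully verify, using Lemma \ref{lem:linear}(4) together with the explicit multiplication tables from the proof of Proposition \ref{pro:linear}, that the smallest invariant element tracks correctly through each type of extension. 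Once this is in place, matching the recursion to the Bell triangle is a routine induction.
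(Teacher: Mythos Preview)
Your argument is correct and takes a genuinely different route from the paper's. The paper refines the count by the \emph{position} $m$ of the smallest invariant element, shows via a bijection with pairs $(X',A)$ (a smaller linear $L$-algebra and a subset of $L_{n-1}\setminus\{e\}$) that $\lambda(n,m)=\binom{n-2}{m-1}B(m-1)$, and then sums using the standard Bell recurrence $B(n-1)=\sum_{m}\binom{n-2}{m-1}B(m-1)$. You instead refine by the value $\phi(X)$ of $p_X\cdot x_{n-1}$ and track how this invariant transforms under the one-step extension of Proposition~\ref{pro:linear}, which produces exactly the Aitken (Bell triangle) recursion. The case analysis you describe is accurate: for $c=x_n$ the new bottom element becomes the smallest invariant and $\phi(X')=0$, while for $c=x_j>x_n$ the set $\{y>x_i\}$ is unchanged for every old $x_i$, so invariance of old elements is preserved, $x_n$ fails to be invariant because $p_X\cdot x_n=c\neq x_n$, and hence $\phi(X')=j$. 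Your approach avoids constructing an explicit bijection and yields the result by a purely recursive bookkeeping argument; the paper's approach, on the other hand, gives a closed-form count for each stratum $\lambda(n,m)$ and a more structural description of how a linear $L$-algebra decomposes into a smaller one plus combinatorial data, at the cost of a slightly longer bijection verification. One small point worth making explicit in your write-up: the induction proving $T(n,j)=A(n-2,j)$ is intertwined with the main induction $\lambda(k)=B(k-1)$, since the boundary value $T(n,1)=T(n-1,0)=\lambda(n-2)$ already requires the hypothesis at level $n-2$; your final paragraph uses this correctly, but stating the joint inductive hypothesis would make the logic transparent.
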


\begin{proof}
Note that $\lambda(n)$ is the number of $L$-algebras on a \emph{fixed} $n$-element set $L_n$. As each isomorphism of $L$-algebras is also an isomorphism of posets and as finite totally ordered sets do not have any nontrivial automorphisms, any two different $L$-algebra structures on $L_n$ are non-isomorphic.

We proceed by induction on $n$. 
Clearly,  $\lambda(1) = B(0) = 1$. Let $n > 0$ and assume that $\lambda(k) = B(k-1)$ holds for all $k < n$. For $m\in\{1,\dots,n-1\}$ let $\lambda(n,m)$ be the number of all linear $L$-algebras on the ordered set $L_n$ whose smallest invariant element is $x_m$. We will now show that 
\[
\lambda(n,m) = \binom{n-1}{m}B(m).
\]
To this end, let $X$ be an $L$-algebra on $L_n$ whose smallest invariant element is $x_m$. 
Let 
%With $X$ we associate the subalgebra
\begin{gather*}
X_m = \{x_i \in X : i < m \} \subseteq X
\shortintertext{and the subset}
X^m = \{x_m \cdot x_j : j > m \} \subseteq L_{n-1}.
\end{gather*}
We claim that the assignment $X \mapsto (X_m,X^m) $ is a bijective correspondence 
between $L$-algebras on $L_n$ with smallest invariant element $x_m$ and pairs $(X',A)$, where 
$X^{\prime}$
is an $L$-algebra 
on $L_m$ and $A$ is an $(n-m-1)$-subset of $L_{n-1} \setminus \{ e \}$.

Let $X^{\prime}$ be an $L$-algebra on $L_m$ and $A \subseteq L_{n-1}$ be 
an $(n-m-1)$-subset of $L_{n-1}$. By Proposition \ref{pro:linear}, 
there is a unique $L$-algebra $X^{(0)}$ on $L_{m+1}$ that has $x_m$ as an invariant element and restricts to $X^{\prime}$ on $L_m$. Assume that 
\[
A = \{ a_1 > a_2 > \ldots > x_{n-m-1} \}
\]
and note that from $a_{n-m-1} \geq x_{n-2}$ it follows that $a_i \geq x_{m-1+i} > x_{m+i}$ for all $i$.
Clearly, $x_m$ is the smallest invariant element of $X^{(0)}$ and $x_m \cdot x_m = e > a_1$. By Proposition \ref{pro:linear}, there is a unique $L$-algebra $X^{(1)}$ on $L_{m+2}$ such that $p \cdot x_{m+1} = a_1$ and which restricts to $X^{(0)}$ on $L_{m+1}$. Note that $x_m$ is still the smallest invariant element in $X^{(1)}$ as $a_1 > x_{m+1}$. By repeated use of the proposition, one can show inductively that there is a unique sequence of $L$-algebras $X^{(i)}$ on $L_{m+1+i}$ for all $i \leq n-m-1$ such that $X^{(i)}$ restricts to $X^{(i-1)}$ on $L_{m+i}$, whose smallest invariant element is $x_m$ and such that $x_m \cdot x_{m+i} = a_i> x_{m+i}$.

We conclude that there is a unique $L$-algebra on $L_n$ that has $x_m$ as its smallest invariant element, restricts to $ X^{\prime} $ on $L_m$ and fulfils 
\[
\{ x_m \cdot x_j: j > m \} = A.
\]
By induction, it follows that
\[
\lambda(n,m) = \binom{n-2}{n-m-1} B(m-1) = \binom{n-2}{m-1}B(m-1).
\]
Using a well-known recursion formula for Bell numbers (see for example
\cite{MR161805}), we conclude that 
\[
\lambda(n) = \sum_{m=1}^{n-1} \lambda(n,m) = \sum_{m=1}^{n-1} \binom{n-2}{m-1}B(m-1) = B(n-1).\qedhere
\]
\end{proof}

%\cite[pages 377 and 566]{MR1397498} or 

Unfortunately, despite using the recursive construction of finite linear $L$-algebras, this recursion at first glance does not seem to translate into the recursive construction of partitions that leads to the used recursion formula of Bell numbers. We, therefore, pose the following problem: 

\begin{problem}
Find an explicit bijection between $L$-algebras on $L_n$ and partitions of the set $\{0,\ldots,n-1 \}$.
\end{problem}

\section{Regular $L$-algebras}
\label{regular}

%In this section, we discuss finite \emph{regular} $L$-algebras, as defined in Rump's article \cite{MR2437503}.

\begin{defn}
An $L$-algebra $X$ is called \emph{semiregular} if
\[
\left( (x \cdot y) \cdot z \right) \cdot \left( (y \cdot x) \cdot z \right) = \left( (x \cdot y) \cdot z \right) \cdot z
\]
for all $x,y,z\in X$.
\end{defn}

%efore making clear the actual role of semiregularity and defining 
%regular $L$-algebras, we note the following fact proved by Rump:

%\begin{table}
%\caption{The number of isomorphism classes of semiregular $L$-algebras of size $\leq9$.}
%\begin{tabular}{|r|ccccccc|}
%\hline
%$n$ & 3 & 4 & 5 & 6 & 7 & 8 & 9\tabularnewline
%\hline 
%semiregular & 3 & 10 & 39 & 179 & 943 & 5587 & 36874  \tabularnewline
%\hline
%\end{tabular}
%\label{tab:semiregular}
%\end{table}

\begin{pro}[Rump] 
\label{pro:(x.y).x=e.in_semiregular_algebra}
If $X$ is a semiregular $L$-algebra, then $x \leq y \cdot x$
for all $x,y\in X$. 
\end{pro}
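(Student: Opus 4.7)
The definition of the natural order \eqref{eq:poset} translates the assertion $x\leq y\cdot x$ into the single identity $x\cdot(y\cdot x)=e$. My plan is to obtain this identity directly from the semiregular axiom via one well-chosen specialisation, exploiting the asymmetric way in which $x$ and $y$ appear inside versus outside the brackets of that identity.

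The natural substitution to try is $y=e$ in
$$\left((x\cdot y)\cdot z\right)\cdot\left((y\cdot x)\cdot z\right)=\left((x\cdot y)\cdot z\right)\cdot z,$$
since this collapses $x\cdot y=x\cdot e=e$ and $y\cdot x=e\cdot x=x$ by the unit axioms of Definition \ref{defn:L}, breaking the apparent symmetry between the two sides. The right-hand side then becomes $(e\cdot z)\cdot z=z\cdot z=e$, while the left-hand side becomes $(e\cdot z)\cdot(x\cdot z)=z\cdot(x\cdot z)$. Combining these one obtains $z\cdot(x\cdot z)=e$ for all $x,z\in X$, and after relabelling (reading the current $z$ as the new $x$ and the current $x$ as the new $y$) this is precisely $x\cdot(y\cdot x)=e$, which is the desired $x\leq y\cdot x$.

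There is no genuine obstacle here: the argument is a single substitution followed by routine unit simplifications, and it uses neither the cycloid equation nor the antisymmetry axiom. The only conceptual point worth flagging is \emph{why} $y=e$ is the right specialisation: it is the unique choice that destroys the $(x,y)\leftrightarrow(y,x)$ symmetry of the semiregular identity, turning the right-hand side into $e$ while leaving a non-trivial term on the left. Any other obvious specialisation (such as $x=e$, $z=e$, $x=y$, or $z=x$) either yields a tautology or produces no new inequality, so identifying $y=e$ as the correct substitution is the main — and only — bit of insight required.
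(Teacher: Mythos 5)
Your argument is correct. The paper itself gives no argument here: it simply cites Rump's Proposition~14 of \cite{MR2437503}. Your substitution $y=e$ in the semiregularity identity does the job in two lines: with $x\cdot e=e$ and $e\cdot x=x$ the left-hand side becomes $(e\cdot z)\cdot(x\cdot z)=z\cdot(x\cdot z)$ and the right-hand side becomes $(e\cdot z)\cdot z=z\cdot z=e$, so $z\cdot(x\cdot z)=e$, i.e. $z\leq x\cdot z$ for all $x,z\in X$, which after relabelling is exactly the claim. Each simplification uses only axiom (1) of Definition~\ref{defn:L}, and indeed neither the cycloid equation nor antisymmetry is needed. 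So where the paper outsources the statement to the literature, your proof makes it self-contained at essentially zero cost; the only caveat is one of attribution rather than mathematics, namely that the result is Rump's and the citation should be kept even if the short verification is included.
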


\begin{proof}
This is \cite[Proposition 14]{MR2437503}.
\end{proof}

The proposition implies that in a semiregular $L$-algebra $X$, all upsets 
\[
x^{\uparrow} = \{ y \in X : y \geq x \}
\]
are sub-$L$-algebras.

%Beneath semiregular $L$-algebras, 
%a special role is played by the \emph{regular} $L$-algebras.

\begin{defn}
\label{defn:regular}
A \emph{regular} $L$-algebra is a semiregular $L$-algebra $X$ 
such that for all $x,y \in X$ with $x \leq y$ there exists a $z \geq x$ such that $z \cdot x = y$.
\end{defn}

A big family of regular $L$-algebras can be constructed from $\ell$-groups. In the following, we will collect the necessary definitions and facts.

First of all, an \emph{ordered group} is a pair $(G,\leq)$ of a group $G$ and a partial 
order relation $\leq$ on $G$ that is invariant under 
multiplication in the sense that for all $x,y,z \in G$, the implication
\[
x \leq y \Rightarrow (xz \leq yz) \  \mathrm{ and }\  (zx \leq zy)
\]
holds. As usual, we will just say that $G$ itself is an ordered group whenever it is clear from the context that $G$ is equipped with an appropriate order relation.
If $G$ is an ordered group, we call $G$ a \emph{lattice-ordered group} (\emph{$\ell$-group}, for short) when $G$ is a lattice under $\leq$. This means that for any elements $x,y \in G$, there is a \emph{meet} $x \wedge y$, i.e. a greatest element $z$ such that $z \leq x$ and $z \leq y$; also, there is a \emph{join} $x \vee y$, i.e. a least element $z$ such that $z \geq x$ and $z \geq y$. It can be shown % Darnel as literature?
that in an $\ell$-group $G$, 
%the meet and join operations are invariant in the sense that 
for any $x,y,z \in G$, the following equations hold:
\begin{align*}
    (x \wedge y)z = xz \wedge yz &,\quad z(x \wedge y) = zx \wedge zy \\
    (x \vee y)z = xz \vee yz &, \quad z(x \vee y) = zx \vee zy.
\end{align*}

An important example of a right $\ell$-group is the additive group $\mathbb{Z}$, with $\leq$ being the natural order on the integers. The lattice operations are given by $x \wedge y = \min \{ x,y \}$ and $x \vee y = \max \{ x,y \}$, respectively, where $x,y \in \mathbb{Z}$.

$\ell$-groups give rise to a special class of $L$-algebras. 
Let $G$ be an $\ell$-group and define its \emph{negative cone} as $G^- = \{ g \in G : g \leq e \}$. Then $G^-$ has a canonical $L$-algebra structure given by the binary operation
\[
x \cdot y = (x \wedge y)x^{-1} = e \wedge yx^{-1}.
\]
In particular, $(G^-,\cdot)$ is a regular $L$-algebra, see \cite[page 2343]{MR2437503}.
Except for trivial examples, these $L$-algebras are infinite. However, as $x \cdot y \geq y$ holds in any regular $L$-algebra by Proposition \ref{pro:(x.y).x=e.in_semiregular_algebra}, it follows that each non-empty, upwards-closed subset $X \subseteq G^-$ is closed under the $L$-algebra operation and is therefore a regular $L$-algebra by restriction. Here, a subset $X \subseteq G^-$ is called \emph{upwards-closed} if for all $x,y \in G^-$ with $x \in X$ and $y \geq x$, we also have that $y \in X$. Although $G^-$ is almost always infinite, a sub-$L$-algebra thereof can be finite in a few more cases.

Before stating Rump's embedding theorem for regular $L$-algebras, we need the notion of dense element (which can be defined in arbitrary $L$-algebras). 

\begin{defn}
An element $x$ in an $L$-algebra $x$ is called \emph{dense} whenever there is an $y \in X$ with $x \geq y$ and $x \cdot y = y$.
\end{defn}

\begin{thm} \label{thm:rump_embedding_theorem}
An $L$-algebra $X$ can be embedded as a sub-$L$-algebra in the negative cone of some $\ell$-group $G$ if and only if $X$ is regular and has no dense elements aside from $e$. For such an $L$-algebra, the $\ell$-group $G$ can be chosen in such a way that $G^-$ is generated as a monoid by $X$, in which case, $G$ is unique up to isomorphism of $\ell$-groups.
\end{thm}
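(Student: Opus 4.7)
The plan is to treat the three claims (necessity, sufficiency, and uniqueness) in sequence, with the bulk of the work going into sufficiency.

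For the necessity direction, suppose that $X$ is a sub-$L$-algebra of $G^-$ for some $\ell$-group $G$. The absence of non-trivial dense elements is the easiest part: if $x,y \in G^-$ with $x \geq y$, then $yx^{-1} \leq e$, so $x \cdot y = e \wedge yx^{-1} = yx^{-1}$; demanding $x \cdot y = y$ forces $x = e$. For regularity, since $(G^-,\cdot)$ is itself a regular $L$-algebra (as recalled in the excerpt) and the semiregular identity is an equation, semiregularity passes to any sub-$L$-algebra. The solvability condition requires a little more care, since the canonical witness $z = y^{-1}x$ in $G^-$ for $x \leq y$ need not a priori lie in $X$; the argument proceeds by showing that $z = y^{-1}x$ agrees with an $L$-algebra expression in $x$ and $y$ (which one recognises by interpreting the $\wedge$-closure $C(X)$ of Theorem \ref{thm:Rump2} inside $G^-$), so the witness in fact does lie in $X$.

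For the sufficiency direction, the strategy is to build $G$ from the $\wedge$-closure $C(X)$ supplied by Theorem \ref{thm:Rump2}. The commutative semilattice-ordered monoid $(C(X),\wedge,e)$ carries an induced action of its own elements by the $L$-algebra operation, via $(x_1 \wedge \cdots \wedge x_k) \cdot z$ rewritten in terms of~$\cdot$. The key intermediate step is to promote this into a cancellative structure, so that $C(X)$ embeds into a group of right fractions $G$. Both hypotheses enter decisively: regularity of $X$ furnishes the divisibility witnesses needed to invert multiplication, while the density hypothesis rules out the degenerate identifications that would otherwise collapse distinct elements of $X$ in the fraction group. Once cancellativity is established, the lattice order on $C(X)$ extends uniquely to an $\ell$-group order on $G$ whose negative cone equals (the image of) $C(X)$ and is generated as a monoid by $X$, and the composite $X \hookrightarrow C(X) \hookrightarrow G^-$ is the desired embedding.

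For uniqueness, I would argue by universal properties. If $H$ is any $\ell$-group whose negative cone is generated as a monoid by an isomorphic copy of $X$, Theorem \ref{thm:Rump2} lifts the inclusion $X \hookrightarrow H^-$ to a homomorphism of $\wedge$-semibraces $C(X) \to H^-$, which then extends (by the universal property of the group of right fractions) to an $\ell$-group homomorphism $G \to H$ restricting to the identity on~$X$. Since both $G^-$ and $H^-$ are generated as monoids by $X$, this homomorphism is necessarily surjective on negative cones and, by cancellativity, injective as well; it follows that $G \cong H$ as $\ell$-groups.

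The main obstacle is the sufficiency step, specifically the cancellativity of the semibrace multiplication on $C(X)$ and the consequent injectivity of $C(X) \hookrightarrow G^-$. Both rely on the simultaneous use of regularity and absence of non-trivial dense elements, and bypassing $C(X)$ seems unlikely to succeed: $X$ itself carries no group structure, and the group relations one would want to impose on a presentation generated by $X$ only become visible after the $\wedge$-closure has made the internal semilattice structure explicit.
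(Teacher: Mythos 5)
There is a genuine gap, and it sits exactly where you locate the ``main obstacle'': the construction of $G$ from the $\wedge$-closure cannot work as described. The monoid $(C(X),\wedge,e)$ is a semilattice, hence every element is idempotent ($a\wedge a=a$); an idempotent commutative monoid is never cancellative unless it is trivial, and its group of (right) fractions collapses to the trivial group, since in a group the only idempotent is the identity. No interplay of regularity and absence of dense elements can repair this, because the obstruction is the idempotency of $\wedge$ itself. Relatedly, your claim that the resulting negative cone ``equals (the image of) $C(X)$'' contradicts the statement being proved: $G^-$ is to be generated as a monoid by $X$ under the \emph{group} multiplication, so for a finite nontrivial $X$ it is infinite, whereas $C(X)$ is finite (it is generated as a semilattice by a finite set). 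The object that actually plays the role you assign to $C(X)$ in Rump's argument is the \emph{self-similar closure} $S(X)$ of \cite{MR2437503}: a monoid whose multiplication is a new operation built from the $L$-algebra structure (not $\wedge$), which under regularity and the absence of nontrivial dense elements satisfies the cancellation and Ore-type conditions needed to form a group of fractions $G$ with $G^-=S(X)$; the meet is then recovered inside $S(X)$, rather than the other way around. Your uniqueness argument inherits the same defect, since it extends a semibrace map $C(X)\to H^-$ along the nonexistent fraction construction; the intended argument (and the remark in the paper) is instead that the monoid and lattice structure of $G^-$ is determined by $X$, and $G$ is determined by its negative cone. Note also that the paper does not prove this theorem at all: it is quoted from \cite[Theorem 4]{MR2437503}, so there is no in-paper argument your plan could be matched against.

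Two smaller points. In the necessity direction, the solvability witness is handled too quickly: for $x\leq y$ in $X\subseteq G^-$ one needs $z\in X$ with $z\geq x$ and $z\cdot x=y$, and the natural candidate expressible in the $L$-algebra operations is $z=y\cdot x=(y\wedge x)y^{-1}$, which does the job in the commutative case but requires an actual argument in general; asserting that ``the witness agrees with an $L$-algebra expression'' by interpreting $C(X)$ inside $G^-$ is not a proof. Finally, even granting the intended outline, the plan defers all of the substantive work (cancellativity, the Ore condition, extending the lattice order to $G$), so as it stands it is a programme rather than a proof.
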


\begin{proof}
See \cite[Theorem 4.]{MR2437503}. The second part essentially follows from Theorem 3. which implies that the monoid and lattice structures of $G^-$ are determined by $X$ - which implies that those of $G$ are, as well.
\end{proof}

If $(G_i)_{i \in I}$ is a (not necessarily finite) family of $\ell$-groups, we can form their \emph{cardinal sum}
\[
\bigoplus_{i \in I} G_i := \left\{ (g_i)_{i \in I} \in \prod_{i \in I} G_i \ : \ g_i = 0 \textnormal{  for all but finitely many } i \in I \right\}
\]
which again is an $\ell$-group under coordinate-wise group and lattice operations. 
Two important examples of such $\ell$-groups are $\mathbb{Z}^m = \oplus_{i = 1}^m \mathbb{Z}$ and 
$\mathbb{Z}^{\omega} = \oplus_{i=1}^{\infty} \mathbb{Z}$.
%We can now give another central definition of this section:

\begin{defn}
An \emph{infinite-dimensional Young-diagram} is a finite, non-empty and upwards-closed subset in $ (\mathbb{Z}^{\omega})^-$.
\end{defn}

\begin{defn}
Two infinite-dimensional Young-diagrams are said to be \emph{conjugate} 
if they can be converted into one another by a permutation of the coordinates of~$\mathbb{Z}^{\omega}$.
\end{defn}

As an upwards-closed set in the negative cone of an $\ell$-group, each infinite-dimensional Young-diagram becomes an instance of a regular $L$-algebra after restricting the $L$-algebra operations from $(\mathbb{Z}^{\omega})^-$ to $X$. Also, conjugate infinite-dimensional Young-diagrams have isomorphic $L$-algebra structures, for permuting the coordinates of $\mathbb{Z}^{\omega}$ is clearly an automorphism of $\ell$-groups and therefore also induces an automorphism of the respective $L$-algebra structure. This implies that conjugate infinite-dimensional Young-diagrams are related by an isomorphism of $L$-algebras.

When enumerating regular $L$-algebras % Refer to table?
we observed that the first numbers are equal to those in sequence A119268 of \cite{oeis}  which counts infinite-dimensional Young-diagrams up to conjugacy. This suggested that infinite-dimensional Young-diagrams might actually be the same as finite regular $L$-algebras. 

\begin{lem}
\label{lem:dense}
    Let $X$ be a finite regular $L$-algebra and $x\in X$.
    If $x$ is dense, then $x=e$. 
\end{lem}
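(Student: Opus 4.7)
The plan is to use finiteness together with regularity to show that left-multiplication by the witness $y$ of density, restricted to the upper cone of $y$, is a bijection, and then force a collision between $e$ and $x$.

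Suppose $x \in X$ is dense, so there exists $y \in X$ with $y \leq x$ and $x \cdot y = y$. I would introduce the set $U = \{ z \in X : z \geq y \}$ and the map $\phi \colon U \to U$ defined by $\phi(z) = z \cdot y$. This map is well-defined on $U$: by Proposition \ref{pro:(x.y).x=e.in_semiregular_algebra}, which applies because every regular $L$-algebra is semiregular, one has $y \leq z \cdot y$ for all $z \in X$, so $\phi(z) \in U$. Regularity of $X$ gives, for each $u \in U$, an element $z \geq y$ with $z \cdot y = u$, which is exactly the assertion that $\phi$ is surjective. Since $X$ is finite, so is $U$, and a surjective self-map of a finite set is automatically bijective, hence injective.

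To finish, I would evaluate $\phi$ at $e$ and at $x$. On the one hand, $\phi(e) = e \cdot y = y$ by the defining axiom of an $L$-algebra; on the other hand, $\phi(x) = x \cdot y = y$ by density. Both $e$ and $x$ belong to $U$ (the former because $e$ is the maximum of $X$, the latter because $x \geq y$ by density), so injectivity of $\phi$ forces $x = e$, as required.

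I do not foresee any real obstacle. The only slightly subtle point is noticing that Proposition \ref{pro:(x.y).x=e.in_semiregular_algebra} makes $U$ invariant under $\phi$, which is what allows finiteness and surjectivity to combine into injectivity; once this is observed, the argument is essentially a pigeonhole on the two-element fiber $\{e, x\} \subseteq \phi^{-1}(y)$.
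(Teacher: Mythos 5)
Your argument is correct and is essentially identical to the paper's proof: the same map $y^{\uparrow}\to y^{\uparrow}$, $z\mapsto z\cdot y$, well-defined by Proposition \ref{pro:(x.y).x=e.in_semiregular_algebra}, surjective by regularity, hence injective by finiteness, forcing the two preimages $e$ and $x$ of $y$ to coincide. Nothing is missing.
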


\begin{proof}
    Let $x \geq y$ be elements in $X$ such that $x \cdot y = y$. By Proposition \ref{pro:(x.y).x=e.in_semiregular_algebra}, the map $y^{\uparrow} \to y^{\uparrow}$, $x \mapsto x \cdot y$, is well-defined and, due to regularity, surjective. Since $X$ is finite, this map is also injective which implies that the element $y \in y^{\uparrow}$ has exactly one preimage, which must be $e$. 
\end{proof}

\begin{thm}
\label{thm:regular}
    There exists a bijective correspondence between finite regular $L$-algebras
    and infinite-dimensional Young diagrams. 
\end{thm}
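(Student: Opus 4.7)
The forward direction is already laid out in the preceding discussion: restricting the $L$-algebra operation of $(\mathbb{Z}^\omega)^-$ to a finite, non-empty, upwards-closed subset $Y$ produces a regular $L$-algebra, and conjugate Young diagrams yield isomorphic $L$-algebras. The entire content of the theorem is therefore to build a two-sided inverse on isomorphism classes, and this reduces to showing that the ambient $\ell$-group provided by Rump's embedding theorem is a direct sum of copies of $\mathbb{Z}$.

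Let $X$ be a finite regular $L$-algebra. By Lemma~\ref{lem:dense} the only dense element of $X$ is $e$, so Theorem~\ref{thm:rump_embedding_theorem} embeds $X$ as an upwards-closed subset of $G^-$ for a unique $\ell$-group $G$ with $G^-$ monoid-generated by $X$. Let $a_1,\dots,a_m$ be the atoms of $X$. I would first check that these coincide with the atoms of $G^-$: any atom $a$ of $G^-$ factors as a product $x_1\cdots x_k$ with $x_i \in X$, and since each factor lies in $G^-$ one has $a \leq x_i \leq e$, so the atomicity of $a$ forces $a = x_i$ for some $i$. Next, for distinct atoms $a_i, a_j$, the join $a_i \vee a_j$ cannot lie strictly below $e$ (no element sits strictly between an atom and $e$), so $a_i \vee a_j = e$; by the standard $\ell$-group fact that orthogonal elements commute, the $a_i$ pairwise commute. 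Finally, I would prove by induction on the length of the interval $[x,e]_X$ that every $x \in X$ is a product of atoms: pick $y \in X$ covering $x$, apply the inductive hypothesis to $y$, and observe that $xy^{-1} \in G^-$ is covered by $e$ (else $y$ would not cover $x$), hence is an atom and lies in $X$, so that $x = (xy^{-1})\,y$ in $G$. Combined with $G^-$ being monoid-generated by $X$, this forces $G^- = \langle a_1,\dots,a_m\rangle \cong \mathbb{N}^m$ and $G \cong \mathbb{Z}^m$ with the componentwise order. Any injection from $\{a_1,\dots,a_m\}$ to the coordinate atoms of $(\mathbb{Z}^\omega)^-$ then realises $X$ as an infinite-dimensional Young diagram, well-defined up to conjugacy since different injections differ by a coordinate permutation.

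The two assignments are mutually inverse by the uniqueness clause of Theorem~\ref{thm:rump_embedding_theorem}: starting from a Young diagram $Y$ whose atoms are $-e_{i_1},\dots,-e_{i_m}$, the $\ell$-subgroup of $\mathbb{Z}^\omega$ generated by $Y$ is $\mathbb{Z}\langle e_{i_1},\dots,e_{i_m}\rangle$, so the inverse construction returns $Y$ up to a coordinate permutation. The principal obstacle is the identification $G \cong \mathbb{Z}^m$, which rests on the commutativity of atoms and the factorisation of every element of $X$ as a product of atoms. Both exploit the interplay between Proposition~\ref{pro:(x.y).x=e.in_semiregular_algebra}, the surjectivity-to-bijectivity argument already used in Lemma~\ref{lem:dense}, and the $\ell$-group formula $x \cdot y = e \wedge yx^{-1}$; carrying these out cleanly, and in particular verifying that there are no hidden relations among the $a_i$, is the main technical content not already present in the excerpt.
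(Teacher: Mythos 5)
Your overall route is the paper's (identify the ambient $\ell$-group of Rump's embedding theorem as $\mathbb{Z}^m$ via the elements covered by $e$, then use the uniqueness clause to match isomorphism classes with conjugacy classes of Young diagrams), but there is a genuine gap at its foundation: you attribute to Theorem~\ref{thm:rump_embedding_theorem} that $X$ embeds as an \emph{upwards-closed} subset of $G^-$. The theorem only provides an embedding of $X$ as a sub-$L$-algebra of $G^-$ with $G^-$ generated as a monoid by $X$; upward-closedness of the image is precisely what has to be proved for the image to be a Young diagram, and the paper devotes a separate step to it (after $G\simeq\mathbb{Z}^m$ is established: for $y\in X$ and $x\in G^-$ with $x\succ y$, the element $z=yx^{-1}$ lies in $\mathrm{Max}(X)$, and $z\cdot y=(z\wedge y)z^{-1}=x$, so $x\in X$ because $X$ is closed under the $L$-algebra operation). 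The omission also makes your induction circular: from ``$y$ covers $x$ in $X$'' you infer that $xy^{-1}$ is covered by $e$ in $G^-$ ``else $y$ would not cover $x$'', but an element of $G^-$ strictly between $x$ and $y$ need not lie in $X$, so it does not contradict covering \emph{in $X$}; that inference is valid only if $X$ is already known to be upwards-closed in $G^-$, which is the point at issue. The paper avoids covers in $G^-$ altogether: if the submonoid $M$ generated by $\mathrm{Max}(X)$ missed part of $X$, take the largest $y\in X\setminus M$ and $x\in\mathrm{Max}(X)$ with $x\geq y$; then $x\cdot y>y$ by Proposition~\ref{pro:(x.y).x=e.in_semiregular_algebra} and Lemma~\ref{lem:dense}, hence $x\cdot y\in M$ and $y=(x\cdot y)x\in M$, a contradiction.

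A secondary, fixable issue: your identification of the atoms is only half argued. The factorization argument shows that every atom of $G^-$ lies in $X$ (the direction you use in the induction), but the orthogonality step $a_i\vee a_j=e$ needs the converse, that elements of $\mathrm{Max}(X)$ are atoms of $G^-$; this does follow from monoid generation (any $g\in G^-\setminus\{e\}$ is a product of elements of $X\setminus\{e\}$ and hence satisfies $g\leq x$ for some such $x$, so no element of $G^-$ sits strictly between a member of $\mathrm{Max}(X)$ and $e$), but you must say it. The paper instead gets commutativity directly from the $L$-algebra identity $x\cdot y=y$ for distinct $x,y\in\mathrm{Max}(X)$, which yields $x\wedge y=yx=xy$ without invoking joins or the orthogonality-implies-commutation fact. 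Your concluding appeal to uniqueness of $G$ and to $\ell$-isomorphisms of $\mathbb{Z}^m$ being coordinate permutations agrees with the paper; once the upward-closedness step and the atom identification are supplied, the argument closes.
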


\begin{proof}
By Theorem \ref{thm:rump_embedding_theorem}, there is an $\ell$-group $G$ with an embedding $X \hookrightarrow G^-$ that identifies $X$ as a sub-$L$-algebra of $G^-$ and a generating set thereof. For sake of simplicity, we will identify $X$ with its image in $G^-$. We want to show that $G \simeq \mathbb{Z}^m$ and that $X$ is upwards-closed in $G^-$ under this embedding.

Recall, that for $x,y \in X$, the relation $x\prec y$ means that $x< y$ and there
is no $z\in X$ such that $x<z<y$.  
Let $\mathrm{Max}(X) := \{ x \in X : x \prec e \}$ be the set of elements lying directly under $e$. 

We claim that $G^-$ is generated by $\mathrm{Max}(X)$. Let $M \subseteq G^-$ be the submonoid generated by $\mathrm{Max}(X)$. If there was an element in $X \setminus M$, there would also be a biggest one, say $y \in X \setminus M$. Necessarily, there exists $x \in \mathrm{Max}(X)$ with $x \geq y$. By Proposition \ref{pro:(x.y).x=e.in_semiregular_algebra} and Lemma \ref{lem:dense}, $x \cdot y > y$. Since $x\in\mathrm{Max}(X)$, $x \cdot y \in M$. Also, we calculate in $G^-$ that $y = x \wedge y = (x \cdot y)x \in M$, a contradiction. 
We conclude that $X \subseteq M$. As $G^-$ is generated by $X$, it follows that $G^-$ is generated by 
$\mathrm{Max}(X)$.

Let now $x,y \in \mathrm{Max}(X)$. By Proposition \ref{pro:(x.y).x=e.in_semiregular_algebra}, $x \cdot y \geq y$. Here, $x \cdot y = e$ is only possible when $x \leq y$, i.e. when $x = y$. Therefore, $x \cdot y = y$ for $x,y \in \mathrm{X}$ with $x \neq y$. In this case, one can calculate in $G^-$ that $x \wedge y = (x \cdot y)x = yx$ which is the same as $y \wedge x = xy$. These relations show that we have a decomposition of the $\ell$-group $G$ as a cardinal sum
\[
G \simeq \bigoplus_{x \in \mathrm{Max}(X)} \mathbb{Z}
\]
where $\mathrm{Max}(X)$ is identified with the elements in the right group that are $-1$ in exactly one coordinate and $0$ in all others. % probably this can be stated more elegantly.

It remains to show that $X$ is upwards-closed in $G^-$. It clearly suffices to show that for $y \in X$ and all $x \in G^-$ with $x \succ y$, we also have $x \in X$. In such a case, $z = yx^{-1} \in \mathrm{Max}(X)$. As we have already seen that $G$ is abelian, we also have $z = x^{-1}y \geq y$. We can now calculate in $G^-$
\[
z \cdot y = (z \wedge y)z^{-1} = y(y^{-1}x) = x.
\]
As $X$ is a sub-$L$-algebra of $G^-$, this implies that $x \in X$.

We have thus shown that each finite regular $L$-algebra $X$ is isomorphic to an upwards-closed subset in the negative cone of some $\ell$-group $\mathbb{Z}^m$. Under a coordinate-wise embedding $\mathbb{Z}^m \hookrightarrow \mathbb{Z}^{\omega}$, $X$ is identified with an infinite-dimensional Young-diagram. By the uniqueness part of Theorem \ref{thm:rump_embedding_theorem}, the embedding groups of isomorphic regular $L$-algebras without nontrivial dense elements are also isomorphic, in the sense of $\ell$-groups. Is is easily seen that each isomorphism of two $\ell$-groups of the form $\mathbb{Z}^m$ is given by a permutation of coordinates. This implies that each isomorphism class of finite regular $L$-algebras can be identified with a unique conjugacy class of infinite-dimensional Young-diagrams and that only conjugate Young-diagrams are isomorphic as $L$-algebras.
\end{proof}

\section*{Acknowledgments}

We thank Wolfgang Rump for comments and suggestions. 
This research was supported through the program ``Oberwolfach Research Fellows" by the Mathematisches Forschungsinstitut Oberwolfach in 2022. Mench\'on was partially supported by the National Science Center (Poland), grant number~2020/39/B/HS1/00216; for the purpose of Open Access, the author has applied a CC-BY public copyright licence to any Author Accepted Manuscript (AAM) version arising from this submission.
Vendramin was supported in part by OZR3762 
of Vrije Universiteit Brussel. 

\bibliographystyle{abbrv}
\bibliography{refs}

\end{document}